\theoremstyle{plain}
\newtheorem{theorem}{Theorem}[section]
\newtheorem{lemma}[theorem]{Lemma}
\newtheorem{proposition}[theorem]{Proposition}
\theoremstyle{definition}
\newtheorem{definition}[theorem]{Definition}
\newtheorem{remark}[theorem]{Remark}
\newcommand{\C}{\mathbb{C}}
\newcommand{\N}{\mathbb{N}}
\newcommand{\R}{\mathbb{R}}
\newcommand{\mcC}{\mathcal{C}}
\newcommand{\mcE}{\mathcal{E}}
\newcommand{\mcG}{\mathcal{G}}
\newcommand{\mcH}{\mathcal{H}}
\newcommand{\mcL}{\mathcal{L}}
\newcommand{\mcM}{\mathcal{M}}
\newcommand{\Aut}{\mathrm{Aut}}
\newcommand{\rot}{\mathrm{rot}}
\newcommand{\Leb}{\mathrm{Leb}}
\newcommand{\mrd}{\mathrm{d}}
\newcommand{\isorigid}{\mathrm{iso}\mbox{-}\mathrm{rigid}}
\newcommand{\equirigid}{\mathrm{equi}\mbox{-}\mathrm{rigid}}
\newcommand{\prigid}{\mathrm{p}\mbox{-}\mathrm{rigid}}
\newcommand{\id}{\mathrm{id}}
\newcommand{\bfA}{{\bf A}}
\newcommand{\bfB}{{\bf B}}
\newcommand{\TMS}{\Sigma_{\bfA}^+}
\newcommand{\tms}{\sigma_{\bfA}}
\newcommand{\bTMS}{\Sigma_{\bfB}^+}
\newcommand{\btms}{\sigma_{\bfB}}
\title[\null]{Continuity of isomorphisms applied to rigidity problems of entropy spectra}
\author[\null]{Katsukuni\ \ Nakagawa}
\address{K.\ Nakagawa\\
Graduate School of Advanced Science and Engineering\\
Hiroshima University\\
Higashi-Hiroshima 739-8526\\
Japan}
\email{ktnakagawa@hiroshima-u.ac.jp}
\begin{document}

\subjclass[2010]{37A35, 37B10, 37B40.}

\keywords{Isomorphism of measure-preserving systems, Continuity of isomorphisms, Gibbs measures, Entropy spectra, Rigidities of entropy spectra.}

\begin{abstract}
For a fixed topological Markov shift, we consider measure-preserving dynamical systems of Gibbs measures for 2-locally constant functions on the shift.
We also consider isomorphisms between two such systems.
We study the set of all 2-locally constant functions $f$ on the shift such that all those isomorphisms defined on the system associated with $f$ are induced from automorphisms of the shift.
We prove that this set contains a full-measure open set of the space of all 2-locally constant functions on the shift.
We apply this result to rigidity problems of entropy spectra and show that the strong non-rigidity occurs if and only if so does the weak non-rigidity.
\end{abstract}

\maketitle


\section{Introduction}
\label{sec:introduction}

Let $X_1$ and $X_2$ be topological spaces.
Let also $T_i:X_i\to X_i$ be a continuous map and $\mu_i$ a $T_i$-invariant Borel probability measure on $X_i$, for $i\in\{1,2\}$.
Recall the following definition:

\begin{definition}
\label{def:1377}
\normalfont
The two systems $(X_1,T_1,\mu_1)$ and $(X_2,T_2,\mu_2)$ are said to be \emph{isomorphic} if there exist two Borel sets $Y_1\subset X_1,\,Y_2\subset X_2$ and a bijection $\Phi:Y_1\to Y_2$ such that the following four conditions hold:
\begin{itemize}
\item For $i\in\{1,2\}$, $T_i(Y_i)\subset Y_i$ and $\mu_i(Y_i)=1$.
\item Both $\Phi:Y_1\to Y_2$ and $\Phi^{-1}:Y_2\to Y_1$ are Borel measurable. 
\item $\Phi\circ T_1=T_2\circ\Phi$ on $Y_1$.
\item $\mu_1=\mu_2\circ \Phi$.
\end{itemize}
\end{definition}
The map $\Phi:Y_1\to Y_2$ in Definition \ref{def:1377} is called an \emph{isomorphism} from $(X_1,T_1,\mu_1)$ to $(X_2,T_2,\mu_2)$.
For two isomorphisms $\Phi:Y_1\to Y_2$ and $\Psi:Z_1\to Z_2$ from $(X_1,T_1,\mu_1)$ to $(X_2,T_2,\mu_2)$, we call $\Psi$ is a \emph{restriction} of $\Phi$ if $Z_1\subset Y_1$ and $\Phi|_{Z_1}=\Psi$.
A homeomorphism $\tau:X_1\to X_2$ is called a \emph{topological conjugacy} from $(X_1,T_1)$ to $(X_2,T_2)$ if $\tau\circ T_1=T_2\circ\tau$ on $X_1$.
We give the following definition:

\begin{definition}
\normalfont
The two systems $(X_1,T_1,\mu_1)$ and $(X_2,T_2,\mu_2)$ are said to be \emph{equivalent} if there exists a topological conjugacy $\tau$ from $(X_1,T_1)$ to $(X_2,T_2)$ such that $\mu_1=\mu_2\circ\tau$.
\end{definition}

Clearly, two equivalent systems are isomorphic.
In this paper, we focus on the situation in which the converse holds, for a topological Markov shift. 

Let $N\ge2$ be an integer and $\bfA$ an $N\times N$ zero-one matrix.
We set
\[
\TMS=\{\omega=(\omega_n)_{n\in\N\cup\{0\}}\in\{1,\dots,N\}^{\N\cup\{0\}}:\bfA_{\omega_n\omega_{n+1}}=1,\ n\in\N\cup\{0\}\}
\]
and define the metric $d$ on $\TMS$ by 
\[
d(\omega,\omega')=\sum_{n=0}^{\infty}\frac{|\omega_n-\omega'_n|}{2^n}.
\]
Then, $\TMS$ is a compact metric space and the \emph{shift map} $\tms:\TMS\to\TMS$ defined by 
\[
(\tms\omega)_n=\omega_{n+1},\quad n\in\N\cup\{0\}
\]
is a continuous map. 
We call the topological dynamical system $(\TMS,\tms)$ a \emph{(one-sided) topological Markov shift} and $\bfA$ the \emph{transition matrix} of the shift.
We say that $\bfA$ is \emph{primitive} if there exists a positive integer $k$ such that all entries of the power $\bfA^k$ are positive.
A topological conjugacy from $(\TMS,\tms)$ to itself is called an \emph{automorphism} of $\TMS$.
For a H\"{o}lder continuous function $f:\TMS\to\R$, we denote by $\mu_f$ the (unique) Gibbs measure for $f$ (see Section \ref{sec:preliminaries} for the definition of a Gibbs measure). 
Let $\mcC$ be a set consisting of real-valued H\"{o}lder continuous functions on $\TMS$.
As in \cite{Nakagawa DS 2021}, for $f\in\mcC$, we define the subset $[f]_3$ of $\mcC$ by
\[
[f]_3=\{g\in\mcC:(\TMS,\tms,\mu_f)\ \mbox{and}\ (\TMS,\tms,\mu_g)\ \mbox{are isomorphic}\}.
\]
Moreover, we consider the following property (P) of $f$:
\[
(\mathrm{P})\ \ 
\begin{minipage}{0.85\textwidth}
For any $g\in[f]_3$ and isomorphism $\Phi$ from $(\TMS,\tms,\mu_f)$ to $(\TMS,\tms,\mu_g)$, there exist a restriction $\Psi$ of $\Phi$ and an automorphism $\bar{\Psi}$ of $\TMS$ such that $\bar{\Psi}$ is a unique continuous extension of $\Psi$ to $\TMS$.
\end{minipage}
\]
We set
\[
\mcC_0=\{f\in\mcC:f\ \mbox{satisfies the property (P)}\}.
\]
We denote by $L_2(\TMS,\R)$ the set of all real-valued 2-locally constant functions on $\TMS$ (see Section \ref{sec:preliminaries} for the definition of a 2-locally constant function).
Then, $L_2(\TMS,\R)$ is a finite-dimensional real vector space, and hence, becomes a real analytic manifold in a natural way.
We denote by $\Leb$ the Lebesgue measure on $L_2(\TMS,\R)$.
In this paper, we consider whether the set $\mcC_0$ is (topologically or measure-theoretically) large or not when $\mcC=L_2(\TMS,\R)$.
The following theorem is the main result of this paper and shows that the set $\mcC_0$ is both topologically and measure-theoretically large.

\begin{theorem}
\label{thm:7724}
Let $\bfA$ be an arbitrary primitive zero-one matrix of size $N(\ge2)$ and let $\mcC=L_2(\TMS,\R)$.
Then, $\mcC_0$ contains an open set $G$ of $\mcC$ such that $\Leb(\mcC\setminus G)=0$.
\end{theorem}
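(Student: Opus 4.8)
The plan is to exhibit $G$ explicitly as the non-vanishing locus of a single nontrivial real-analytic function on $\mcC$, and to prove that on this locus every measure-theoretic isomorphism is forced, almost everywhere, to coincide with an automorphism of $\TMS$. First I would coordinatize $\mcC=L_2(\TMS,\R)$: a $2$-locally constant $f$ is determined by its values $f_{ij}:=f|_{[ij]}$ on the admissible $2$-cylinders (those $(i,j)$ with $\bfA_{ij}=1$), so $\mcC\cong\R^{E}$ with $E=\#\{(i,j):\bfA_{ij}=1\}$. By the thermodynamic formalism, $\mu_f$ is the stationary Markov measure attached to the Perron--Frobenius data of the nonnegative matrix $M^f_{ij}=\bfA_{ij}e^{f_{ij}}$; since $\bfA$ is primitive the Perron root is simple, so the leading eigenvalue, the normalized left and right eigenvectors, the transition probabilities $P^f_{ij}$, the stationary vector $p^f$, and in particular the backward (fiber) weights $w^f_{a\mid j}=p^f_aP^f_{aj}/p^f_j$ are all real-analytic functions of $(f_{ij})$ on all of $\R^{E}$. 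This analyticity is the source of the measure-zero exceptional set.

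Next I would isolate the rigidity coming from non-invertibility of $\tms$, which is what makes the statement plausible at all. An isomorphism $\Phi$ with $\Phi\circ\tms=\tms\circ\Phi$ must, almost everywhere, carry the fiber $\tms^{-1}(x)$ onto $\tms^{-1}(\Phi x)$ and transport the conditional measure of $\mu_f$ on the first fiber to that of $\mu_g$ on the second; for a $2$-locally constant function that conditional measure is the discrete measure whose weights are the $w^f_{a\mid j}$ above. I would then impose the genericity condition that these fiber weights lie in general position---concretely, that for each admissible $j$ the numbers $\{w^f_{a\mid j}\}_{a:\bfA_{aj}=1}$ are pairwise distinct and that the finitely many further comparisons needed to identify symbols across fibers are non-degenerate. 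Each such condition is the non-vanishing of a real-analytic function of $(f_{ij})$; letting $F$ be the product of this finite list and setting $G=\{f\in\mcC:F(f)\neq0\}$, the set $G$ is open as the non-vanishing locus of a continuous function.

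I would then prove $G\subset\mcC_0$. Weight-distinctness removes all freedom in how $\Phi$ may match points within a fiber: the matching is forced by equality of weights, so $\Phi$ respects cylinders and hence agrees almost everywhere with a finite-range, invertible sliding-block code, that is, with an automorphism $\bar\Psi$ of $\TMS$ (invertibility of the code follows by running the same argument for $\Phi^{-1}$). Since $\mu_f$ has full support, the domain $Y_1$ is dense in $\TMS$; restricting $\Phi$ to the full-measure set where it equals $\bar\Psi$ produces a restriction $\Psi$ whose unique continuous extension is $\bar\Psi$, which is exactly property (P), so $f\in\mcC_0$. For the measure statement, $\mcC\setminus G$ is the zero set of the real-analytic function $F$ on $\R^{E}$; a nontrivial real-analytic function has Lebesgue-null zero set, and nontriviality of $F$ is verified by exhibiting one $f$ with sufficiently spread-out values $f_{ij}$ for which every required distinctness relation visibly holds.

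The main obstacle is the rigidity step: proving that distinctness of the fiber weights genuinely forces an isomorphism to be a block code. The delicate part is upgrading the almost-everywhere matching of conditional measures on \emph{individual} fibers to a \emph{global} conclusion that $\Phi$ equals a continuous, finite-range map almost everywhere---showing that the fiberwise matchings cohere along the entire tree of $\tms$-preimages and that no measurable-but-discontinuous rearrangement can survive. This is precisely the point at which the Gibbs (bounded-distortion) property must be combined with the genericity conditions, and it is the technical heart underlying the continuity of the isomorphism.
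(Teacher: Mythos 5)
Your strategy is in fact the paper's strategy: the fiber weights $w^f_{a\mid j}$ you introduce are precisely the entries $Q(f)_{aj}$ of the column-stochastic matrix the paper attaches to $f$ (equivalently $\exp(\widehat f\,)$, the Jacobian of $\mu_f$ with respect to $\tms$), your genericity condition is pairwise distinctness of these entries, and the open/full-measure statement comes in both cases from real-analyticity of the Perron data together with the fact that a nontrivial real-analytic function has Lebesgue-null zero set. That last part of your argument is sound. The genuine gap is the step you yourself flag as ``the main obstacle'': you assert, but do not prove, that distinctness of the fiber weights forces $\Phi$ to agree almost everywhere with a sliding-block code. Since this implication is the entire content of property (P), the proposal does not establish the theorem as written. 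The paper closes this gap as follows: the isomorphism transports the Jacobian, so $Q(f)_{\omega_n\omega_{n+1}}=Q(g)_{(\Phi\omega)_n(\Phi\omega)_{n+1}}$ for all $n$ on a full-measure $\tms$-invariant set $Z_1$ (Lemmas \ref{lem:0477} and \ref{lem:3725}); a combinatorial lemma (Lemma \ref{lem:1723}) shows that when the entries of $Q$ indexed by $E_0(\bfA)$ are pairwise distinct, the string of $Q$-values along a word determines the word backwards from any letter lying in $V_0(\bfA)$; and Lemma \ref{lem:305a} (every admissible cycle meets $V_0(\bfA)$, by primitivity) guarantees such a letter occurs in every admissible window of length $N+1$. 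Hence $\omega_0\cdots\omega_{N+n}$ determines $(\Phi\omega)_0\cdots(\Phi\omega)_n$ on $Z_1$, i.e.\ $\Phi|_{Z_1}$ is uniformly continuous, and density of $Z_1$ (full support of Gibbs measures) gives the continuous extension; invertibility then follows from the symmetric argument applied to $\Phi^{-1}$, as you indicate.

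There is also a concrete defect in the genericity condition itself. You require that for each $j$ the weights $\{w^f_{a\mid j}\}_{a}$ be pairwise distinct and that ``the finitely many further comparisons needed to identify symbols across fibers'' be non-degenerate. But if the $j$th column of $\bfA$ has a single nonzero entry $\bfA_{aj}$, then $w^f_{a\mid j}=1$ identically for every $f$, so these weights can never be separated from one another across different such columns; no choice of your function $F$ makes symbol identification a one-step consequence of weight values there. This is not a removable technicality: matrices with several columns of in-degree one, such as (\ref{eq:2978}), are exactly the case the paper is written to handle (the case not covered by the earlier result it cites). This is why the distinctness condition must be restricted to $E_0(\bfA)$, and why the reconstruction needs the two extra ingredients above: a letter of in-degree one has a unique predecessor, so identification propagates backwards for free through degenerate columns (the second half of Lemma \ref{lem:1723}), and the propagation is anchored because an in-degree-$\ge2$ letter is always reached within $N$ steps (Lemma \ref{lem:305a}). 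Without these, your claim that ``the matching is forced by equality of weights'' fails at every degenerate column.
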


Notice that the set $G$ is dense in $\mcC$ if $\Leb(\mcC\setminus G)=0$. 

We apply Theorem \ref{thm:7724} to rigidity problems of entropy spectra.
We recall the definition of the entropy spectrum of a $\tms$-invariant Borel probability measure on $\TMS$. 

\begin{definition}
\normalfont
\label{def:524}
Let $\mu$ be a $\tms$-invariant Borel probability measure on $\TMS$. 
We set
\[
E_{\alpha}=\left\{\omega\in\TMS:\lim_{n\to\infty}-\frac{1}{n}\log\mu\left(\left\{\omega'\in\TMS:\omega_k=\omega'_k,\ k=0,\dots,n-1\right\}\right)=\alpha\right\}
\]
for $\alpha\in\R$ and define the function $\mcE^{(\mu)}:\R\to\R$ by
\[
\mcE^{(\mu)}(\alpha)=h(\tms|E_{\alpha}).
\]
Here, for $Z\subset\TMS$, we denote by $h(\tms|Z)$ the topological entropy of $Z$.
The function $\mcE^{(\mu)}$ is called the \emph{entropy spectrum} of $\mu$.
\end{definition}

In general, the entropy spectrum $\mcE^{(\mu)}$ has a wealth of information about the measure $\mu$, 
and a \emph{rigidity} is a phenomenon that the measure $\mu$ is `recovered' from its entropy spectrum $\mcE^{(\mu)}$.
However, in this paper, we consider `non-rigidity' phenomena.
See \cite{Barreira}, \cite{Nakagawa2016} and \cite{Nakagawa2017} for rigidity problems of dimension spectra on one-dimensional piecewise linear Markov maps.

Let $\mcC=L_2(\TMS,\R)$.
Again as in \cite{Nakagawa DS 2021}, for $f\in\mcC$, we define the two subsets $[f]_1$ and $[f]_2$ of $\mcC$ by
\begin{align*}
&[f]_1=\{g\in\mcC:\mcE^{(\mu_f)}=\mcE^{(\mu_g)}\},\\
&[f]_2=\{g\in\mcC:(\TMS,\tms,\mu_f)\ \mbox{and}\ (\TMS,\tms,\mu_g)\ \mbox{are equivalent}\}.
\end{align*}
Recall that two H\"{o}lder continuous functions $f,g:\TMS\to\R$ are said to be \emph{cohomologous with a constant} if $f-g=\varphi\circ\tms-\varphi+c$ holds for some H\"{o}lder continuous $\varphi:\TMS\to\R$ and $c\in\R$.
If two H\"{o}lder continuous functions $f,g:\TMS\to\R$ are cohomologous with a constant, then $\mu_f=\mu_g$, and hence, $[f]_i=[g]_i$ for $i\in\{1,2,3\}$.
Moreover, for $f\in\mcC$, it holds that
\begin{equation}
\label{eq:2927}
[f]_2\subset[f]_3\subset[f]_1.
\end{equation}
Indeed, $[f]_2\subset[f]_3$ is obvious and $[f]_3\subset[f]_1$ follows from \cite[Theorem 3.1]{Nakagawa DS 2021} which states that two isomorphic systems of Gibbs measures have the same entropy spectrum.
We set
\[
\mcC_{\isorigid}=\{f\in\mcC:[f]_1=[f]_3\},\quad\mcC_{\equirigid}=\{f\in\mcC:[f]_1=[f]_2\}
\]
and consider the following two non-rigidity phenomena:
\begin{alignat*}{2}
&\mbox{(SNR)}\ &&\mbox{The complement}\ \mcC\setminus\mcC_{\isorigid}\ \mbox{contains an open set}\ G\ \mbox{of}\ \mcC\ \mbox{such that}\\ 
&&&\Leb(\mcC\setminus G)=0.\\
&\mbox{(WNR)}\ &&\mbox{The complement}\ \mcC\setminus\mcC_{\equirigid}\ \mbox{contains an open set}\ G\ \mbox{of}\ \mcC\ \mbox{such that}\\
&&&\Leb(\mcC\setminus G)=0.
\end{alignat*}
We call the non-rigidity (SNR) (resp., (WNR)) the \emph{strong (resp., weak) non-rigidity}.
Notice that (\ref{eq:2927}) implies
\begin{equation}
\label{eq:1908}
\mcC_{\equirigid}\subset\mcC_{\isorigid}.
\end{equation}
Hence, if (SNR) occurs, then so does (WNR).
In fact, the converse also holds, and hence, we obtain the following theorem:

\begin{theorem}
\label{thm:5066}
Let $\bfA$ be an arbitrary primitive zero-one matrix of size $N(\ge2)$ and let $\mcC=L_2(\TMS,\R)$.
Then, the strong non-rigidity (SNR) occurs if and only if the weak non-rigidity (WNR) occurs.
\end{theorem}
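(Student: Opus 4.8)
The plan is to show that the two rigidity sets $\mcC_{\isorigid}$ and $\mcC_{\equirigid}$ coincide on the large set $\mcC_0$ supplied by Theorem~\ref{thm:7724}, and then to intersect the open set coming from (WNR) with the open set of Theorem~\ref{thm:7724}. Since the implication (SNR)$\Rightarrow$(WNR) is already recorded via (\ref{eq:1908}), only the converse (WNR)$\Rightarrow$(SNR) needs proof. The heart of the matter is the following claim: for every $f\in\mcC_0$ one has $[f]_2=[f]_3$. Granting this, $f\in\mcC_0$ satisfies $[f]_1=[f]_3$ if and only if $[f]_1=[f]_2$, so that $\mcC_0\cap\mcC_{\isorigid}=\mcC_0\cap\mcC_{\equirigid}$, and the assertion follows by routine measure-theoretic bookkeeping.

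To prove the claim, I would fix $f\in\mcC_0$ and $g\in[f]_3$, and let $\Phi$ be an isomorphism from $(\TMS,\tms,\mu_f)$ to $(\TMS,\tms,\mu_g)$. Property (P) produces a restriction $\Psi\colon Z_1\to Z_2$ of $\Phi$ together with an automorphism $\bar\Psi$ of $\TMS$ extending $\Psi$ continuously. Being a restriction, $\Psi$ is itself an isomorphism, so $\mu_f(Z_1)=\mu_g(Z_2)=1$ and $\mu_f=\mu_g\circ\Psi$ on $Z_1$. Since $\bar\Psi$ agrees with $\Psi$ on the $\mu_f$-full set $Z_1$, a direct computation with preimages shows that $\mu_f=\mu_g\circ\bar\Psi$ holds on all Borel subsets of $\TMS$. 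As $\bar\Psi$ is by definition a topological conjugacy from $(\TMS,\tms)$ to itself, this exhibits $(\TMS,\tms,\mu_f)$ and $(\TMS,\tms,\mu_g)$ as equivalent, i.e.\ $g\in[f]_2$. Hence $[f]_3\subset[f]_2$, and combined with the trivial inclusion $[f]_2\subset[f]_3$ this gives $[f]_2=[f]_3$.

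With the claim in hand, suppose (WNR) holds and choose an open set $G_1\subset\mcC\setminus\mcC_{\equirigid}$ with $\Leb(\mcC\setminus G_1)=0$. Let $G$ be the open set furnished by Theorem~\ref{thm:7724}, so that $G\subset\mcC_0$ and $\Leb(\mcC\setminus G)=0$. Put $G'=G\cap G_1$; then $G'$ is open and $\Leb(\mcC\setminus G')\le\Leb(\mcC\setminus G)+\Leb(\mcC\setminus G_1)=0$. For any $f\in G'$ we have $f\in\mcC_0$ and $f\notin\mcC_{\equirigid}$, so the claim forces $f\notin\mcC_{\isorigid}$; thus $G'\subset\mcC\setminus\mcC_{\isorigid}$. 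This is precisely the statement of (SNR), completing the proof.

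I expect the only delicate point to be the transfer of the measure identity from the full-measure domain of $\Psi$ to the whole of $\TMS$: one must check that because $\bar\Psi$ is a homeomorphism agreeing with the measure-preserving map $\Psi$ off a $\mu_f$-null set, the pushforward $\bar\Psi_*\mu_f$ coincides with $\mu_g$ as Borel measures. Everything else is formal manipulation of the inclusions $[f]_2\subset[f]_3\subset[f]_1$ together with the Lebesgue-null bookkeeping.
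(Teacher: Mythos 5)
Your proposal is correct and follows essentially the same route as the paper: the ``only if'' direction via the inclusion $\mcC_{\equirigid}\subset\mcC_{\isorigid}$, and the ``if'' direction by intersecting the open full-measure set from (WNR) with the set $G$ of Theorem~\ref{thm:7724} and using that property (P) forces $[f]_2=[f]_3$. The only difference is that you spell out the deduction of $[f]_2=[f]_3$ from (P) (transferring $\mu_f=\mu_g\circ\Psi$ to the continuous extension $\bar\Psi$), which the paper leaves implicit; this step is valid since a restriction is by definition again an isomorphism.
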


The `if' part of Theorem \ref{thm:5066} is a direct consequence of Theorem \ref{thm:7724}.
For the sake of completeness, we present the proof of Theorem \ref{thm:5066}, in Section \ref{sec:non-rigidity} of this paper.
Moreover, in the same section, using (the `if' part of) Theorem \ref{thm:5066}, we show that the strong non-rigidity (SNR) occurs when
\begin{equation}
\label{eq:2978}
\bfA=\begin{pmatrix}
0&1&1&1\\
1&0&0&1\\
0&1&0&0\\
0&1&0&0
\end{pmatrix}.
\end{equation}
An analogue of Theorem \ref{thm:7724} was proved by \cite[Lemma 5.7]{Nakagawa DS 2021} for $\bfA$ such that at most one row of $\bfA$ has only one non-zero entry.
However, the matrix $\bfA$ in (\ref{eq:2978}) does not satisfy this condition since the third and fourth rows have only one non-zero entry.
Therefore, the strong non-rigidity  (SNR) for $\bfA$ in (\ref{eq:2978}) does not follow from the result of \cite{Nakagawa DS 2021}.
The non-rigidity phenomena strongly depend on the size $N$ of $\bfA$.
See \cite[Section 5]{Nakagawa DS 2021} for the explanation of the role of $N$ in rigidity problems.

This paper is organized as follows.
In Section \ref{sec:preliminaries}, we give preliminary definitions and basic facts. 
Especially, the construction of Gibbs measures via transfer operators is reviewed.
In Section \ref{sec:cond_continuity}, we give a sufficient condition for an isomorphism between two systems of Gibbs measures for 2-locally constant functions to be induced from a topological conjugacy (Theorem \ref{thm:5100}).
This condition is essential to the proof of Theorem \ref{thm:7724}, the main result of this paper.
In Section \ref{sec:main}, we prove Theorem \ref{thm:7724}.
In Section \ref{sec:non-rigidity}, we prove Theorem \ref{thm:5066}.
Moreover, we show that the strong non-rigidity (SNR) occurs for the transition matrix $\bfA$ in (\ref{eq:2978}) (Theorem \ref{thm:0426}).


\section{Preliminaries}
\label{sec:preliminaries}

Let $N\ge2$ be an integer and $\bfA$ an $N\times N$ primitive zero-one matrix.
An element of $\bigcup_{n\in\N\cup\{0\}}\{1,\dots,N\}^n$ is called a \emph{word}. 
For a word $w$ and $n\in\N\cup\{0\}$, we write $|w|=n$ if $w\in\{1,\dots,N\}^n$.
Thus, a word $w$ can be written in the form $w=w_0\cdots w_{|w|-1}$, where $w_0,\dots,w_{|w|-1}\in\{1,...,N\}$. 
The \emph{empty word} is the unique word $w$ with $|w|=0$. 
A word $w$ is said to be $\bfA$-\emph{admissible} if $|w|\le1$ or if $|w|\ge2$ and $\bfA_{w_kw_{k+1}}=1$ for $k\in\{0,\dots,|w|-2\}$. 
We denote by $W(\bfA)$ the set of all $\bfA$-admissible words.
Moreover, we denote by $E(\bfA)$ the set of all $\bfA$-admissible words $w$ with $|w|=2$.

We denote by $\mcM(\TMS)$ the set of all Borel probability measures on $\TMS$.
We give the definition of a Gibbs measure as follows:

\begin{definition}
\label{def:042}
\normalfont
Let $\mu\in\mcM(\TMS)$ be $\tms$-invariant and $f:\TMS\to\R$ continuous.
We call $\mu$ a \emph{Gibbs measure} for $f$ if there exist $C>0$ and $P\in\R$ such that the following inequality holds for $\omega\in\TMS$ and $n\in\N$:
\[
C^{-1}\le\frac{\mu\left(\left\{\omega'\in\TMS:\omega_k=\omega'_k,\ k=0,\dots,n-1\right\}\right)}{\exp(-nP+\sum_{k=0}^{n-1}f(\tms^k\omega))}\le C.
\]
\end{definition}
\begin{remark}
\label{rem:3322}
\normalfont
Let $\mu$ be a Gibbs measure.
By Definition \ref{def:042}, $\mu(G)>0$ for any non-empty open set $G$ of $\TMS$.
\end{remark}

For $\alpha>0$, we denote by $\mcH_{\alpha}(\TMS)$ the set of all complex-valued $\alpha$-H\"{o}lder continuous functions on $\TMS$.
We equip $\mcH_{\alpha}(\TMS)$ with the H\"{o}lder norm.
Then, $\mcH_{\alpha}(\TMS)$ is a complex Banach space.
For $f\in\mcH_{\alpha}(\TMS)$, we define the bounded linear operator $\mcL_f:\mcH_{\alpha}(\TMS)\to \mcH_{\alpha}(\TMS)$ by  
\[
(\mcL_f\phi)(\omega)=\sum_{\omega'\in\TMS:\,\tms\omega'=\omega}e^{f(\omega')}\phi(\omega')
\]
and call it the \emph{transfer operator} of $f$.

For a continuous function $f:\TMS\to\R$, we put
\[
\lambda_f=e^{P(\tms,f)},
\]
where $P(\tms,f)$ denotes the topological pressure of $f$ (for the definition, see, e.g., \cite[Chapter 3]{Parry-Pollicott}).
The following theorem is called the Ruelle-Perron-Frobenius theorem (for the proof, see, e.g., \cite[Theorem 2.2]{Parry-Pollicott}).

\begin{theorem}
\label{thm:410}
Let $\alpha>0$ and let $f\in\mcH_{\alpha}(\TMS)$ be real-valued.
\begin{enumerate}
\item[(i)]$\lambda_f$ is an eigenvalue of $\mcL_f:\mcH_{\alpha}(\TMS)\to \mcH_{\alpha}(\TMS)$ with algebraic multiplicity one.
\item[(ii)]There exist $h\in\mcH_{\alpha}(\TMS)$ and $\nu\in\mcM(\TMS)$ satisfying the following two properties:
\begin{itemize}
\item[(a)]$h>0$ and $\mcL_fh=\lambda_fh$.
\item[(b)]$\int\mcL_f\phi\,\mrd\nu=\lambda_f\int\phi\,\mrd\nu$ for all $\phi\in \mcH_{\alpha}(\TMS)$.
\end{itemize}
\end{enumerate}
\end{theorem}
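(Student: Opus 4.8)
The plan is to establish the three conclusions in the convenient order (ii)(b), (ii)(a), (i), identifying the relevant eigenvalue with $\lambda_f=e^{P(\tms,f)}$ along the way. The existence statements are comparatively soft, while conclusion (i), the algebraic simplicity, is where the real work lies.

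First I would produce the eigenmeasure $\nu$ of (ii)(b) by a fixed-point argument on the dual. The set $\mcM(\TMS)$ is convex and compact in the weak-$*$ topology, and the normalized dual map $\nu\mapsto\mcL_f^{*}\nu/(\mcL_f^{*}\nu)(1)$, where $\mcL_f^{*}$ is the adjoint of $\mcL_f$ acting on measures and $1$ denotes the constant function, is weak-$*$ continuous. By the Schauder--Tychonoff fixed-point theorem it has a fixed point, which yields a probability measure $\nu$ and a constant $\beta>0$ with $\mcL_f^{*}\nu=\beta\nu$, that is, $\int\mcL_f\phi\,\mrd\nu=\beta\int\phi\,\mrd\nu$ for every $\phi\in\mcH_{\alpha}(\TMS)$.

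Next I would build the positive eigenfunction $h$ of (ii)(a) from the iterates $\beta^{-n}\mcL_f^{n}1$. The key analytic input is a uniform bounded-distortion estimate: since $f$ is $\alpha$-H\"{o}lder, the Birkhoff sums $\sum_{k=0}^{n-1}f(\tms^{k}\omega')$ along the $\tms^{n}$-preimage branches over two nearby points differ by a uniformly bounded amount, which forces the family $\{\beta^{-n}\mcL_f^{n}1\}_n$ to be uniformly bounded in the H\"{o}lder norm and hence equicontinuous. By Arzel\`a--Ascoli (or by passing to the Ces\`aro averages $\frac1n\sum_{k=0}^{n-1}\beta^{-k}\mcL_f^{k}1$) a subsequence converges uniformly to some $h\ge0$, $h\not\equiv0$, with $\mcL_f h=\beta h$. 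Strict positivity of $h$ then comes from primitivity of $\bfA$: for some $k$ the operator $\mcL_f^{k}$ sends every nonnegative, nonzero function to a strictly positive one, so $h=\beta^{-k}\mcL_f^{k}h>0$. To see $\beta=\lambda_f$, I would compute the exponential growth rate of the iterates; since $(\mcL_f^{n}1)(\omega)=\sum_{\tms^{n}\omega'=\omega}\exp\bigl(\sum_{k=0}^{n-1}f(\tms^{k}\omega')\bigr)$ and $\beta^{-n}\mcL_f^{n}1$ stays comparable to the strictly positive $h$, the bounded-distortion estimate identifies $\lim_n\frac1n\log\|\mcL_f^{n}1\|_{\infty}=\log\beta$ with the partition-sum definition of the topological pressure, whence $\beta=e^{P(\tms,f)}=\lambda_f$.

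The main obstacle is conclusion (i), that $\lambda_f$ has algebraic multiplicity one, where a genuine spectral gap is needed. I would normalize $\mcL_f$ by setting $\tilde{\mcL}\phi=\lambda_f^{-1}h^{-1}\mcL_f(h\phi)$, which is again a transfer operator but now satisfies $\tilde{\mcL}1=1$ and preserves the probability measure $\mrd\mu=h\,\mrd\nu$. For this normalized operator I would run a Birkhoff cone-contraction argument: the cone of positive $\alpha$-H\"{o}lder functions with a suitably controlled H\"{o}lder-to-sup ratio is mapped by a fixed power of $\tilde{\mcL}$ (using primitivity of $\bfA$) strictly into itself with finite projective diameter, so by Birkhoff's theorem $\tilde{\mcL}$ contracts the associated Hilbert projective metric and is a strict contraction after projectivization. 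This produces a spectral gap: the eigenvalue $1$ of $\tilde{\mcL}$ is isolated and simple with a rank-one spectral projection, which is precisely algebraic multiplicity one; transporting back through the conjugacy $\phi\mapsto h\phi$ transfers this to $\lambda_f$ for $\mcL_f$. An alternative to the cone argument would be to prove a Lasota--Yorke inequality to obtain quasi-compactness (essential spectral radius strictly below $\lambda_f$) and then invoke primitivity to rule out other peripheral eigenvalues and Jordan blocks; either route, the crux is upgrading the soft existence statements into the quantitative contraction that forces algebraic simplicity.
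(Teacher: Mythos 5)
The paper does not actually prove this statement---it is the classical Ruelle--Perron--Frobenius theorem, quoted with a pointer to \cite[Theorem 2.2]{Parry-Pollicott}---and your outline is correct and follows essentially the standard proof given there: Schauder--Tychonoff on $\mcM(\TMS)$ for the eigenmeasure, bounded distortion plus Arzel\`a--Ascoli (with the normalization $\int\beta^{-n}\mcL_f^n1\,\mrd\nu=1$ ensuring the limit $h$ is nonzero) for the eigenfunction, identification of $\beta$ with $e^{P(\tms,f)}$ through the partition sums, and a Lasota--Yorke/quasi-compactness or Birkhoff-cone argument for the spectral gap yielding algebraic simplicity. No genuine gap; the only points to tighten in a full write-up are the integral normalization just mentioned and the positivity step, where the power $k$ with $\mcL_f^k h>0$ should be chosen depending on the cylinder on which $h$ is bounded below rather than uniformly over all nonnegative nonzero functions.
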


Using the Ruelle-Perron-Frobenius theorem, we can construct Gibbs measures as follows:

\begin{theorem}
\label{thm:014}
Let $\alpha>0$ and let $f\in \mcH_{\alpha}(\TMS)$ be real-valued.
Let also $h$ and $\nu$ be as in Theorem \ref{thm:410}, with $\int h\,\mrd\nu=1$.
Then, the Borel probability measure $\mu$ on $\TMS$ defined by $\mrd\mu=h\,\mrd\nu$ is a unique Gibbs measure for $f$.
\end{theorem}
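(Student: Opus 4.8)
The plan is to check three things in turn: that $\mu=h\,\nu$ is a $\tms$-invariant probability measure, that it satisfies the two-sided estimate of Definition \ref{def:042} with $P=P(\tms,f)=\log\lambda_f$, and that it is the only Gibbs measure for $f$. The two tools I would lean on are the eigendata of Theorem \ref{thm:410}---namely $h>0$, $\mcL_f h=\lambda_f h$, and $\int\mcL_f\phi\,\mrd\nu=\lambda_f\int\phi\,\mrd\nu$---and the elementary identity $\mcL_f\big((\phi\circ\tms)\,\psi\big)=\phi\cdot\mcL_f\psi$, which is immediate from the definition of the transfer operator. Note first that $\mu(\TMS)=\int h\,\mrd\nu=1$, and that $h$, being continuous and strictly positive on the compact space $\TMS$, is bounded above and below by positive constants. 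For invariance I would apply the eigenmeasure property to $\psi:=(\phi\circ\tms)\,h$ with $\phi\in\mcH_{\alpha}(\TMS)$: the identity gives $\mcL_f\psi=\lambda_f\,\phi\,h$, so $\int(\phi\circ\tms)\,h\,\mrd\nu=\int\phi\,h\,\mrd\nu$, that is $\int\phi\circ\tms\,\mrd\mu=\int\phi\,\mrd\mu$. Since $\mcH_{\alpha}(\TMS)$ is uniformly dense in the continuous functions on $\TMS$, this extends to all continuous $\phi$, so $\mu$ is $\tms$-invariant.

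For the Gibbs bound, fix an $\bfA$-admissible word $w=w_0\cdots w_{n-1}$ and write $\mu([w])=\lambda_f^{-n}\int\mcL_f^{\,n}(\mathbf{1}_{[w]}h)\,\mrd\nu$ by iterating the eigenmeasure relation. A direct computation identifies $\mcL_f^{\,n}(\mathbf{1}_{[w]}h)(\omega)$ with $\exp\big(\sum_{k=0}^{n-1}f(\tms^k(w\omega))\big)\,h(w\omega)$ when the concatenation $w\omega$ is admissible and with $0$ otherwise. Two estimates then control the integral up to a multiplicative constant independent of $w$ and $n$. The first is bounded distortion: from $\alpha$-H\"{o}lder continuity and the metric structure of $\TMS$, the sum $\sum_{k=0}^{n-1}f(\tms^k\xi)$ varies by at most a fixed constant as $\xi$ ranges over $[w]$. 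The second is full support of $\nu$, which I would obtain from primitivity: for $k$ so large that $\bfA^k$ is entrywise positive one has $\mcL_f^{\,k}\mathbf{1}_{[j]}>0$ on all of $\TMS$, hence $\lambda_f^{k}\nu([j])=\int\mcL_f^{\,k}\mathbf{1}_{[j]}\,\mrd\nu>0$ for every symbol $j$, and therefore the sets $\{\omega:\bfA_{w_{n-1}\omega_0}=1\}$ receive $\nu$-mass bounded below by a positive constant. Combining these with the boundedness of $h$ gives $C^{-1}\le\mu([w])\big/\exp\big(-n\log\lambda_f+\sum_{k=0}^{n-1}f(\tms^k\xi)\big)\le C$ for any $\xi\in[w]$, which is exactly the Gibbs property with $P=\log\lambda_f=P(\tms,f)$.

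The step I expect to be the main obstacle is uniqueness. Let $\mu'$ be any Gibbs measure for $f$, with constants $C'$ and $P'$. I would first show $P'=P(\tms,f)$: summing the two-sided bound for $\mu'$ over all admissible words of length $n$ and using $\sum_{|w|=n}\mu'([w])=1$ forces $e^{nP'}\asymp\sum_{|w|=n}\exp\big(\sum_{k=0}^{n-1}f(\tms^k\xi_w)\big)$, whose exponential growth rate is $P(\tms,f)$ by the standard characterization of topological pressure; hence $P'=P(\tms,f)=P$. The Gibbs bounds for $\mu$ and $\mu'$ then yield $(CC')^{-1}\le\mu([w])/\mu'([w])\le CC'$ uniformly over all cylinders, and since the cylinders generate the Borel $\sigma$-algebra this comparability upgrades to $\mu\ll\mu'\ll\mu$. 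Finally, both measures are $\tms$-invariant and a Gibbs measure is ergodic---an additional standard input coming from the spectral gap in the full Ruelle-Perron-Frobenius theorem; as two ergodic invariant measures that are not mutually singular must coincide, mutual absolute continuity forces $\mu=\mu'$. The real work is thus concentrated in the pressure identity and the ergodicity input, everything else reducing to the transfer-operator identities of the first two paragraphs.
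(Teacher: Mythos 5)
The paper does not prove this theorem at all: it is quoted as a classical result with a pointer to \cite[Corollary 3.2.1]{Parry-Pollicott}, so there is no in-paper argument to compare against. Your sketch is, in substance, the standard proof from that literature, and its three stages are sound: the invariance argument via $\mcL_f\bigl((\phi\circ\tms)\,\psi\bigr)=\phi\cdot\mcL_f\psi$ is exactly right; the identification $\mu([w])=\lambda_f^{-n}\int\mcL_f^{\,n}(\mathbf{1}_{[w]}h)\,\mrd\nu$ is legitimate because $\mathbf{1}_{[w]}$ is locally constant and hence H\"older; and the bounded-distortion plus full-support estimates do yield the Gibbs inequality with $P=\log\lambda_f$, which coincides with $P(\tms,f)$ by the paper's definition of $\lambda_f$. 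Two small points deserve care. First, ergodicity of $\mu$ is indeed not extractable from Theorem \ref{thm:410} as stated and needs the convergence part of the Ruelle--Perron--Frobenius theorem, which you correctly flag as an extra input. Second, your uniqueness step asserts that an \emph{arbitrary} Gibbs measure $\mu'$ is ergodic; that is not obvious from Definition \ref{def:042} alone and is also unnecessary --- once you have $\mu'\ll\mu$ with $\mu$ ergodic and $\mu'$ invariant, the Birkhoff theorem applied $\mu'$-a.e.\ (using that ergodic averages converge $\mu$-a.e.\ to $\int\phi\,\mrd\mu$ and are uniformly bounded) already forces $\mu'=\mu$. With that phrasing tightened, the argument is complete and matches the cited source's approach.
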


For the proof of Theorem \ref{thm:014}, see, e.g., \cite[Corollary 3.2.1]{Parry-Pollicott}.
For a H\"{o}lder continuous function $f:\TMS\to\R$, we denote by $\mu_f$ the unique Gibbs measure for $f$.

For $\nu\in\mcM(\TMS)$, we say that $\nu$ is \emph{non-singular} if $\nu\circ\tms^{-1}$ is absolutely continuous with respect to $\nu$.
Let $\nu\in\mcM(\TMS)$ be non-singular.
We define the finite Borel measure $\nu^{(1)}$ on $\TMS$ by
\[
\nu^{(1)}(B)=\sum_{i=1}^N\nu(\tms(B\cap\{\omega\in\TMS:\omega_0=i\})).
\]
It is easy to see that $\nu$ is absolutely continuous with respect to $\nu^{(1)}$.
The Radon-Nikodym derivative
\[
J_{\nu}=\frac{\mrd\nu}{\mrd\nu^{(1)}}
\]
is called the \emph{Jacobian} of $\nu$.
The proof of the following proposition is easy, so we omit it.

\begin{proposition}
\label{prop:1087}
Let $f:\TMS\to\R$ be H\"{o}lder continuous and $\nu$ as in Theorem \ref{thm:410}.
Then, the following two assertions hold:
\begin{enumerate}
\item[(i)] $\nu$ is non-singular.
\item[(ii)] $\nu$ and $\nu^{(1)}$ are absolutely continuous with respect to each other.
Moreover, $J_{\nu}=\lambda_f^{-1}\exp(f),\ \nu\mbox{-a.e.}$
\end{enumerate}
\end{proposition}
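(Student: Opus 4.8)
The plan is to extract both assertions from the single eigenmeasure identity in Theorem \ref{thm:410}(b), namely $\int\mcL_f\phi\,\mrd\nu=\lambda_f\int\phi\,\mrd\nu$, combined with the elementary intertwining relation
\[
\mcL_f\bigl((\psi\circ\tms)\cdot\phi\bigr)=\psi\cdot\mcL_f\phi ,
\]
which drops out of the definition of $\mcL_f$ once one observes that $\psi(\tms\omega')=\psi(\omega)$ for every $\omega'$ with $\tms\omega'=\omega$. Throughout I would use that $\tms$ is Lipschitz in the metric $d$, so that $\psi\circ\tms\in\mcH_{\alpha}(\TMS)$ whenever $\psi\in\mcH_{\alpha}(\TMS)$.

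For assertion (i) I would apply the eigenmeasure identity to the function $(\psi\circ\tms)\cdot1$, for an arbitrary real-valued $\psi\in\mcH_{\alpha}(\TMS)$. Taking $\phi\equiv1$ in the intertwining relation gives $\mcL_f(\psi\circ\tms)=\psi\cdot\mcL_f1$, whence
\[
\lambda_f\int\psi\circ\tms\,\mrd\nu=\int\mcL_f(\psi\circ\tms)\,\mrd\nu=\int\psi\cdot\mcL_f1\,\mrd\nu .
\]
Reading the left-hand side as $\int\psi\,\mrd(\nu\circ\tms^{-1})$, this exhibits $\lambda_f^{-1}\,\mcL_f1$ as the Radon--Nikodym density of $\nu\circ\tms^{-1}$ with respect to $\nu$. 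Since $\mcL_f1\in\mcH_{\alpha}(\TMS)$ is bounded, this density is bounded, and in particular $\nu\circ\tms^{-1}\ll\nu$, which is exactly the non-singularity of $\nu$.

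For assertion (ii) the key step is a change-of-variables formula for $\nu^{(1)}$ through the inverse branches of $\tms$. Writing $[i]=\{\omega\in\TMS:\omega_0=i\}$, the restriction $\tms|_{[i]}$ is a homeomorphism onto $\tms([i])=\{\eta\in\TMS:\bfA_{i\eta_0}=1\}$, with inverse $\eta\mapsto i\eta$ (prepending the symbol $i$). Testing the defining formula for $\nu^{(1)}$ on indicator functions and extending by monotone convergence, I would first establish
\[
\int g\,\mrd\nu^{(1)}=\sum_{i=1}^N\int_{\tms([i])}g(i\eta)\,\mrd\nu(\eta)
\]
for every bounded Borel $g$. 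Applying this with $g=\phi e^f$ for $\phi\in\mcH_{\alpha}(\TMS)$, and noting that as $i$ ranges over the symbols with $\bfA_{i\eta_0}=1$ the points $i\eta$ range precisely over the $\tms$-preimages of $\eta$, the integrand on the right collapses to $(\mcL_f\phi)(\eta)$. Hence $\int\phi e^f\,\mrd\nu^{(1)}=\int\mcL_f\phi\,\mrd\nu=\lambda_f\int\phi\,\mrd\nu$ by the eigenmeasure identity, i.e.\ $\int\phi\,\mrd\nu=\lambda_f^{-1}\int\phi e^f\,\mrd\nu^{(1)}$ for all $\phi\in\mcH_{\alpha}(\TMS)$. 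As the Hölder functions are uniformly dense in $C(\TMS)$ and hence measure-determining, this forces $\mrd\nu=\lambda_f^{-1}e^{f}\,\mrd\nu^{(1)}$. Since $f$ is bounded, the density $\lambda_f^{-1}e^{f}$ is bounded away from $0$ and $\infty$, so $\nu$ and $\nu^{(1)}$ are mutually absolutely continuous and $J_{\nu}=\lambda_f^{-1}\exp(f)$ holds $\nu$-a.e.

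All the computations are elementary, and the only point that requires care is the bookkeeping in the change-of-variables formula for $\nu^{(1)}$: one must verify that $\tms|_{[i]}$ is a bijection onto $\tms([i])$ and that the inverse branch $\eta\mapsto i\eta$ is defined exactly on $\tms([i])$, so that summing over the branches reproduces the sum defining $\mcL_f$. This is the main (and essentially the only) obstacle, and it is settled entirely by the symbolic structure of the one-sided shift, which is why the proposition is stated as straightforward.
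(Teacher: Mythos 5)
Your argument is correct and complete. The paper explicitly omits the proof of this proposition as ``easy,'' and what you have written is the standard argument one would supply: deducing $\mrd(\nu\circ\tms^{-1})=\lambda_f^{-1}(\mcL_f1)\,\mrd\nu$ from the eigenmeasure identity via the intertwining relation $\mcL_f((\psi\circ\tms)\phi)=\psi\,\mcL_f\phi$, and identifying $\mrd\nu=\lambda_f^{-1}e^{f}\,\mrd\nu^{(1)}$ through the change of variables along the inverse branches $\eta\mapsto i\eta$; both steps check out, including the density argument needed to pass from Hölder test functions to equality of measures.
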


For a H\"{o}lder continuous function $f:\TMS\to\R$, we write
\[
\widehat{f}=f+\log\frac{\,h}{\lambda_f(h\circ\tms)},
\]
where $h$ is as in Theorem \ref{thm:410}.
It is easy to see that the definition of $\widehat{f}$ is independent of the choice of $h$ and that $\lambda_{\widehat{f}}=1$.
Moreover, $\widehat{f}$ and $f$ are cohomologous with a constant, and hence, $\mu_{\widehat{f}}=\mu_f$.
The next lemma plays an important role in the proof of Theorem \ref{thm:5100} below.

\begin{lemma}
\label{lem:0477}
Let $f:\TMS\to\R$ be H\"{o}lder continuous.
Moreover, let $\bfB$ be another primitive zero-one matrix and $g:\bTMS\to\R$ H\"{o}lder continuous.
If $(\TMS,\tms,\mu_f)$ and $(\bTMS,\btms,\mu_g)$ are isomorphic, then the following two assertions hold:
\begin{enumerate}
\item[(i)]For any isomorphism $\Phi$ from $(\TMS,\tms,\mu_f)$ to $(\bTMS,\btms,\mu_g)$, we have $\widehat{f}=\widehat{g}\circ\Phi,\ \mu_f\mbox{-}a.e.$
\item[(ii)]$\widehat{f}(\TMS)=\widehat{g}(\bTMS)$.
\end{enumerate}
\end{lemma}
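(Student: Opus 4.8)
The plan is to identify the normalized potential $\widehat{f}$ with the logarithm of a purely measure-theoretic quantity of the system $(\TMS,\tms,\mu_f)$ — its Jacobian — and then to show that this quantity is transported correctly by any isomorphism. First I would record that $\widehat{f}=\log J_{\mu_f}$, $\mu_f$-a.e. Indeed, a direct computation gives $\mcL_{\widehat{f}}1=1$, so by Theorem \ref{thm:410}(i) the eigenfunction of $\widehat{f}$ is constant; since $\mu_{\widehat{f}}=\mu_f$, Theorem \ref{thm:014} then identifies $\mu_f$ with the eigenmeasure $\nu$ attached to $\widehat{f}$. As $\lambda_{\widehat{f}}=1$, Proposition \ref{prop:1087}(ii) applied to $\widehat{f}$ yields $J_{\mu_f}=\exp(\widehat{f})$, $\mu_f$-a.e., and likewise $J_{\mu_g}=\exp(\widehat{g})$, $\mu_g$-a.e. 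Thus assertion (i) is equivalent to $J_{\mu_f}=J_{\mu_g}\circ\Phi$, $\mu_f$-a.e.

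The main step — and the main obstacle — is that $\Phi$ intertwines $\tms$ and $\btms$ but need not send the first-coordinate cylinders of $\TMS$ to those of $\bTMS$, so the defining formula for $\mu_f^{(1)}$ does not transport verbatim. I would resolve this by proving that $\mu_f^{(1)}$ is independent of the chosen partition into sets on which the shift is injective. Concretely, for any finite Borel partition $\{Q_k\}$ of $\TMS$ (mod $0$) with $\tms$ injective on each $Q_k$, the indicator identity $\mathbf{1}_{\tms(B\cap Q_k)}(y)=\sum_{\tms\omega'=y}\mathbf{1}_{B\cap Q_k}(\omega')$ (valid by injectivity), together with $\sum_k\mathbf{1}_{Q_k}=1$ and an interchange of the finite sum with the integral, shows
\[
\sum_k\mu_f(\tms(B\cap Q_k))=\int\Big(\sum_{\tms\omega'=y}\mathbf{1}_B(\omega')\Big)\,\mrd\mu_f(y),
\]
whose right-hand side is manifestly independent of $\{Q_k\}$.

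Writing $[j]_{\bfB}=\{\omega\in\bTMS:\omega_0=j\}$, I would then apply this with the pulled-back partition $Q_j=\Phi^{-1}([j]_{\bfB})$, on which $\tms$ is injective because $\btms$ is injective on $[j]_{\bfB}$ and $\Phi$ is an injective intertwiner. Using $\Phi(\tms(B\cap Q_j))=\btms(\Phi(B)\cap[j]_{\bfB})$ and $\mu_f=\mu_g\circ\Phi$, summing over $j$ gives $\mu_f^{(1)}(B)=\mu_g^{(1)}(\Phi(B))$. Finally, from $\mu_f(B)=\mu_g(\Phi(B))=\int_{\Phi(B)}J_{\mu_g}\,\mrd\mu_g^{(1)}$ and the change of variables induced by the bijection $\Phi$, this identity $\mu_f^{(1)}=\mu_g^{(1)}\circ\Phi$ yields $\mrd\mu_f/\mrd\mu_f^{(1)}=J_{\mu_g}\circ\Phi$, that is $J_{\mu_f}=J_{\mu_g}\circ\Phi$, which is (i).

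For (ii), I would use that $\widehat{f}$ is H\"older continuous and $\TMS$ compact, so $\widehat{f}(\TMS)$ is compact, while Remark \ref{rem:3322} (full support of $\mu_f$) forces the $\mu_f$-essential range of $\widehat{f}$ to coincide with $\widehat{f}(\TMS)$; the same holds for $\widehat{g}$ and $\mu_g$. Since (i) and $\mu_f=\mu_g\circ\Phi$ imply that $\widehat{f}$ and $\widehat{g}$ share the same essential range, I conclude $\widehat{f}(\TMS)=\widehat{g}(\bTMS)$.
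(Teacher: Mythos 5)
Your proof is correct and follows essentially the same route as the paper: identify $\exp(\widehat{f})$ with the Jacobian $J_{\mu_f}$ via $\mcL_{\widehat{f}}1=1$ and Proposition \ref{prop:1087}(ii), transport the Jacobian through $\Phi$, and deduce (ii) from the full support of the Gibbs measures together with continuity of $\widehat{f}$ and $\widehat{g}$. The only difference is that you supply a detailed justification of the key transport identity $J_{\mu_f}=J_{\mu_g}\circ\Phi$ (via the partition-independence of $\mu_f^{(1)}$ and the pulled-back partition $\Phi^{-1}([j]_{\bfB})$), which the paper asserts without proof.
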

\begin{proof}
(i)\ 
Take $\alpha>0$ so that $f\in\mcH_{\alpha}(\TMS)$ and $g\in\mcH_{\alpha}(\bTMS)$.
It is easy to see that $\mcL_{\widehat{f}}1=1$ and $\int\mcL_{\widehat{f}}\phi\,\mrd\mu_f=\int\phi\,\mrd\mu_f$ for $\phi\in \mcH_{\alpha}(\TMS)$.
The analogues for $g$ also hold.
Hence, we see from Proposition \ref{prop:1087} (ii) that $\exp(\widehat{f})=J_{\mu_f}=J_{\mu_g}\circ\Phi=\exp(\widehat{g}\circ\Phi),\ \mu_f\mbox{-}a.e.$
Thus, (i) follows.

(ii)\ From (i), there exists a Borel set $A$ of $\TMS$ with $\mu_f(A)=1$ such that $\widehat{f}=\widehat{g}\circ\Phi$ on $A$.
We have $\widehat{f}(A)=\widehat{g}(\Phi(A))$.
Moreover, from $\mu_f(A)=\mu_g(\Phi(A))=1$ and Remark \ref{rem:3322}, $A$ and $\Phi(A)$ are dense in $\TMS$ and $\bTMS$, respectively.
Hence, by the continuity of $\widehat{f}$ and $\widehat{g}$, we have $\widehat{f}(\TMS)=\widehat{g}(\bTMS)$.
Thus, (ii) follows.
\end{proof}

We define an important class of H\"{o}lder continuous functions on $\TMS$ called locally-constant functions.

\begin{definition}
\normalfont
Let $n\in\N\cup\{0\}$.
A function $\phi:\TMS\to\C$ is said to be $n$-\emph{locally constant} if $\phi(\omega)=\phi(\omega')$ for any $\omega,\omega'\in\TMS$ with $\omega_0\cdots\omega_{n-1}=\omega'_0\cdots\omega'_{n-1}$.
\end{definition}

We denote by $L_n(\TMS,\R)$ the set of all real-valued $n$-locally constant functions on $\TMS$.
It is obvious that $L_n(\TMS,\R)\subset\mcH_{\alpha}(\TMS)$ for any $n\in\N\cup\{0\}$ and $\alpha>0$.
In this paper, we consider only 2-locally constant functions. 

For 2-locally constant functions, we can give a more specific description of their transfer operators, using matrix theory.
We denote by $M_{\bfA}$ the set of all $N\times N$ non-negative matrices $A$ such that, for $i,j\in\{1,\dots,N\}$, $A_{ij}>0$ if and only if $\bfA_{ij}=1$.
A non-negative square matrix $Q$ is called a \emph{column stochastic matrix} if the entries in each column of $Q$ sum to 1.
We also denote by $\mcM_{\bfA}$ the set of all column stochastic matrices in $M_{\bfA}$.
Moreover, for $a=(a_1,\dots,a_N)\in\R^N$, we denote by $a^{\top}$ the transpose of $a$.

\begin{theorem}
\label{thm:790}
For $A\in M_{\bfA}$, the following two assertions hold:
\begin{enumerate}
\item[(i)]There exists a unique positive eigenvalue $\lambda$ of $A$ such that $\lambda>|\eta|$ for any other eigenvalue $\eta\in\C$ of $A$.
Moreover, $\lambda$ has algebraic multiplicity one.
\item[(ii)]Let $\lambda$ be as in (i).
Then, there exist $p,v\in\R^N$ satisfying the following two properties:
\begin{itemize}
\item[(a)] $p_i>0$ and $v_i>0$ for $i\in\{1,\dots,N\}$.
\item[(b)] $p$ and $v^{\top}$ are left and right eigenvectors of $\lambda$, respectively.
\end{itemize}
\end{enumerate}
\end{theorem}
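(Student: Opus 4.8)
Theorem \ref{thm:790} is the Perron–Frobenius theorem for the class $M_{\bfA}$ of matrices whose zero pattern is governed by the primitive matrix $\bfA$. The plan is to reduce both assertions to the classical Perron–Frobenius theorem for primitive (equivalently, irreducible and aperiodic) non-negative matrices, whose statement I will take as standard. The essential observation is that for $A\in M_{\bfA}$, the entry $A_{ij}$ is strictly positive exactly when $\bfA_{ij}=1$, so $A$ and $\bfA$ have the same pattern of nonzero entries; consequently $A^k$ and $\bfA^k$ have the same zero pattern for every $k$, since a product term $\prod_\ell A_{i_\ell i_{\ell+1}}$ is positive iff each factor is, iff the corresponding path is admissible. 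Because $\bfA$ is primitive, there is $k$ with all entries of $\bfA^k$ positive, and hence all entries of $A^k$ are positive as well; that is, $A$ is itself primitive.

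For assertion (i), I would invoke the Perron–Frobenius theorem for primitive matrices directly: a primitive non-negative matrix $A$ has a positive real eigenvalue $\lambda$ (the spectral radius) that strictly dominates every other eigenvalue in modulus, i.e. $\lambda>|\eta|$ for all other eigenvalues $\eta\in\C$, and $\lambda$ is simple, hence of algebraic multiplicity one. Uniqueness of such a strictly dominant positive eigenvalue is automatic from the strict inequality. For assertion (ii), the same theorem supplies a right eigenvector with strictly positive entries and, applying the result to the transpose $A^\top$ (which has the same pattern, is also primitive, and shares the eigenvalue $\lambda$), a left eigenvector with strictly positive entries. Writing these as $v^\top$ and $p$ respectively gives the required $p,v\in\R^N$ with $p_i>0$, $v_i>0$, completing (b), while (a) is exactly the positivity just asserted.

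First I would record the reduction $A\in M_{\bfA}\Rightarrow A$ primitive, spelling out the zero-pattern identity $A^k_{ij}>0\iff (\bfA^k)_{ij}>0$ by the admissible-path argument; this is the only place the primitivity hypothesis on $\bfA$ is used, and it is what licenses calling on the strong (primitive) form of Perron–Frobenius rather than the weaker irreducible form, which would only give dominance up to roots of unity on the spectral circle. Then (i) and (ii) are immediate citations. I do not expect a genuine obstacle here: the content of the theorem is entirely classical once the pattern-preservation under powers is noted, and the proof is essentially a bookkeeping translation of the hypothesis into the language of the Perron–Frobenius theorem. The one point requiring a line of care is confirming that $A^\top\in M_{\bfA^\top}$ is primitive so that the left eigenvector inherits strict positivity; this follows because $\bfA$ primitive implies $\bfA^\top$ primitive (transpose commutes with taking powers), so the same reduction applies.
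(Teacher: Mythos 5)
Your argument is correct and matches the paper's treatment: the paper gives no proof of this statement, simply citing the Perron--Frobenius theorem for primitive non-negative matrices (Seneta, Theorem 1.1), which is exactly what you invoke after the routine observation that any $A\in M_{\bfA}$ shares the zero pattern of the primitive matrix $\bfA$ and is therefore itself primitive. Your extra care about the transpose for the left eigenvector is fine but already contained in the standard statement of the theorem.
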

For the proof of Theorem \ref{thm:790}, see, e.g., \cite[Theorem 1.1]{Seneta}.
We call the above unique eigenvalue $\lambda$ the \emph{Perron root} of $A$.

\begin{remark}
\label{rem:9599}
\normalfont
Let $Q\in\mcM_{\bfA}$.
Then, the Perron root of $Q$ is 1 and $(1,\dots,1)Q=(1,\dots,1)$.
\end{remark}

For $f\in L_2(\TMS,\R)$, we define the matrix $A(f)\in M_{\bfA}$ by
\[
A(f)_{ij}=\bfA_{ij}\exp(f|_{\{\omega\in\TMS:\,\omega_0=i,\ \omega_1=j\}}),\quad i,j\in\{1,\dots,N\}.
\]

\begin{proposition}
\label{prop:2018}
Let $f\in L_2(\TMS,\R)$ and let $\alpha>0$.
\begin{enumerate}
\item[(i)] Let $\lambda$ be an eigenvalue of $A(f)$ and $p\in\C^N$ a left eigenvector of $\lambda$.
We define the 1-locally constant function $h:\TMS\to\C$ by $h(\omega)=p_{\omega_0}$.
Then, $\mcL_fh=\lambda h$, and hence, $\lambda$ is an eigenvalue of $\mcL_f:\mcH_{\alpha}(\TMS)\to\mcH_{\alpha}(\TMS)$.
\item[(ii)] The Perron root of $A(f)$ is equal to the eigenvalue $\lambda_f$ of $\mcL_f:\mcH_{\alpha}(\TMS)\to\mcH_{\alpha}(\TMS)$.
\end{enumerate}
\end{proposition}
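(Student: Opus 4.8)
The plan is to prove (i) by a direct computation with the transfer operator, and then to deduce (ii) from (i) by testing the resulting positive eigenfunction against the eigenmeasure furnished by the Ruelle--Perron--Frobenius theorem.

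For part (i), I would first identify the $\tms$-preimages of a point $\omega\in\TMS$: they are precisely the points $i\omega_0\omega_1\cdots$ with $i\in\{1,\dots,N\}$ and $\bfA_{i\omega_0}=1$. Since $f$ is $2$-locally constant, the value $f(i\omega_0\omega_1\cdots)$ depends only on the initial block $i\omega_0$, and the definition of $A(f)$ gives $\bfA_{i\omega_0}e^{f(i\omega_0\omega_1\cdots)}=A(f)_{i\omega_0}$; moreover $h(i\omega_0\omega_1\cdots)=p_i$. Substituting these into the definition of $\mcL_f$, the quantity $(\mcL_fh)(\omega)$ becomes $\sum_{i=1}^N A(f)_{i\omega_0}p_i$, where extending the sum over all $i$ is harmless because $A(f)_{i\omega_0}=0$ whenever $\bfA_{i\omega_0}=0$. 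This sum is exactly the $\omega_0$-th entry of the row vector $pA(f)$, so by the left-eigenvector relation $pA(f)=\lambda p$ it equals $\lambda p_{\omega_0}=\lambda h(\omega)$. Hence $\mcL_fh=\lambda h$. Because $h$ is a nonzero $1$-locally constant (hence $\alpha$-H\"{o}lder) function, it is a nonzero element of $\mcH_{\alpha}(\TMS)$, and this shows $\lambda$ is an eigenvalue of $\mcL_f$.

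For part (ii), let $\lambda$ be the Perron root of $A(f)$. By Theorem \ref{thm:790} (ii) there is a left eigenvector $p$ of $\lambda$ all of whose entries are strictly positive, so the function $h(\omega)=p_{\omega_0}$ supplied by part (i) satisfies $\mcL_fh=\lambda h$ and $h>0$. I would then invoke the eigenmeasure $\nu$ of Theorem \ref{thm:410} (ii): property (b) gives $\int\mcL_fh\,\mrd\nu=\lambda_f\int h\,\mrd\nu$, whereas $\mcL_fh=\lambda h$ gives $\int\mcL_fh\,\mrd\nu=\lambda\int h\,\mrd\nu$. Since $h>0$ and $\nu$ is a probability measure, $\int h\,\mrd\nu>0$, and cancelling this positive factor yields $\lambda=\lambda_f$.

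The computation behind (i) is purely mechanical, and no serious obstacle arises in (ii) either; the one point meriting care is the identification of a finite-dimensional spectral datum (the Perron root of $A(f)$) with the infinite-dimensional eigenvalue $\lambda_f$ of $\mcL_f$. The mechanism that makes this work is positivity: both $\lambda$ (via part (i) and the positive Perron eigenvector) and $\lambda_f$ (via Theorem \ref{thm:410}) admit strictly positive eigenfunctions of $\mcL_f$, and pairing against the positive eigenmeasure $\nu$ forces the two eigenvalues to agree. The only bookkeeping to watch is keeping straight which object is a left eigenvector of the matrix $A(f)$, which is the induced eigenfunction of the operator $\mcL_f$, and which is the eigenmeasure $\nu$.
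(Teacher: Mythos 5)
Your proof is correct. The paper states Proposition \ref{prop:2018} without proof, so there is no argument to compare against, but what you give is the standard one and is clearly what the author intends: part (i) is the direct computation identifying $(\mcL_fh)(\omega)$ with the $\omega_0$-th entry of $pA(f)$ (your observation that the sum may be extended over all $i$ because $A(f)_{i\omega_0}=0$ when $\bfA_{i\omega_0}=0$ is the right bookkeeping), and part (ii) correctly pins down the Perron root as $\lambda_f$ by pairing the strictly positive eigenfunction from Theorem \ref{thm:790} (ii) against the eigenmeasure $\nu$ of Theorem \ref{thm:410} (ii)(b) and cancelling $\int h\,\mrd\nu>0$. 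No gaps.
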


Let $f\in L_2(\TMS,\R)$.
We denote by $p(f)$ the unique left eigenvector of $\lambda_f$ with $\sum_{i=1}^Np(f)_i=1$.
Then, from Proposition \ref{prop:2018},
\[
\widehat{f}(\omega)=f(\omega)+\log\frac{p(f)_{\omega_0}}{\lambda_f\,p(f)_{\omega_1}}.
\]
Moreover, we define $Q(f)\in M_{\bfA}$ by 
\begin{equation}
\label{eq:5211}
Q(f)_{ij}=A(\widehat{f})_{ij}=\lambda_f^{-1}p(f)_j^{-1}p(f)_iA(f)_{ij},\quad i,j\in\{1,\dots,N\}.
\end{equation}
It is easy to see that $Q(f)\in\mcM_{\bfA}$, i.e., $Q(f)$ is a column stochastic matrix.

We define the function $\delta_{\bfA}:\{1,\dots,N\}\to\N\cup\{0\}$ by
\[
\delta_{\bfA}(j)=\#\{i\in\{1,\dots,N\}:ij\in E(\bfA)\}.
\]
In other words, $\delta_{\bfA}(j)$ denotes the number of non-zero entries in the $j$th column of $\bfA$, for $j\in\{1,\dots,N\}$.
Since $\bfA$ is primitive, $\delta_{\bfA}(j)\ge1$ for any $j\in\{1,\dots,N\}$.
We set
\[
V_0(\bfA)=\{j\in\{1,\dots,N\}:\delta_{\bfA}(j)\ge2\},\ \ E_0(\bfA)=\{ij\in E(\bfA):j\in V_0(\bfA)\}.
\]
We define the set $\mcG_{\bfA}$ of column stochastic matrices by
\begin{equation}
\label{eq:3182}
\mcG_{\bfA}=\{Q\in\mcM_{\bfA}:Q_{ij}\ne Q_{kl}\ \mbox{for any}\ ij,kl\in E_0(\bfA)\ \mbox{with}\ ij\ne kl\}.
\end{equation}

\begin{remark}
\label{rem:1085}
\normalfont
Let $Q\in\mcM_{\bfA}$.
It is easy to see that the following two assertions hold:
\begin{enumerate}
\item[(i)]$E_0(\bfA)=\{ij\in E(\bfA):Q_{ij}\in(0,1)\}$ holds.
\item[(ii)]$\#\{Q_{ij}:ij\in E_0(\bfA)\}\le\#E_0(\bfA)$ holds.
Moreover, the equality $\#E_0(\bfA)=\#\{Q_{ij}:ij\in E_0(\bfA)\}$ holds if and only if $Q\in\mcG_{\bfA}$.
\end{enumerate}
\end{remark}

The following lemma plays a key role in the proof of Theorem \ref{thm:5100} below.

\begin{lemma}
\label{lem:1723}
Let $w,w'\in W(\bfA)$ satisfy $|w|\ge2$ and $|w|=|w'|$.
Let also $Q\in\mcG_{\bfA}$.
If $w_{|w|-1}\in V_0(\bfA)$ and $Q_{w_kw_{k+1}}=Q_{w'_kw'_{k+1}}$ for all $k\in\{0,\dots,|w|-2\}$, then $w=w'$.
\end{lemma}
\begin{proof}
First, we show that $w_{|w|-2}w_{|w|-1}=w'_{|w|-2}w'_{|w|-1}$.
Since $w_{|w|-1}\in V_0(\bfA)$, we have $w_{|w|-2}w_{|w|-1}\in E_0(\bfA)$.
Thus, by Remark \ref{rem:1085} (i), we have $Q_{w'_{|w|-2}w'_{|w|-1}}=Q_{w_{|w|-2}w_{|w|-1}}\in(0,1)$.
Again by Remark \ref{rem:1085} (i), we have $w'_{|w|-2}w'_{|w|-1}\in E_0(\bfA)$.
Since $Q\in\mcG_{\bfA}$, we have $w'_{|w|-2}w'_{|w|-1}=w_{|w|-2}w_{|w|-1}$.

Next, we show that, for $k\in\{1,\dots,|w|-1\}$, $w_{k-1}=w'_{k-1}$ if $w_k=w'_k$.
We consider the case in which $w_k\in V_0(\bfA)$.
Then, $w_{k-1}w_k\in E_0(\bfA)$ and $w'_{k-1}w'_k=w'_{k-1}w_k\in E_0(\bfA)$.
Thus, by $Q_{w'_{k-1}w'_k}=Q_{w_{k-1}w_k}$ and $Q\in\mcG_{\bfA}$, we have $w'_{k-1}w'_k=w_{k-1}w_k$.
We consider the case in which $w_k\notin V_0(\bfA)$.
Then, $\delta_{\bfA}(w_k)=1$.
From this and $\bfA_{w'_{k-1}w_k}=\bfA_{w'_{k-1}w'_k}=\bfA_{w_{k-1}w_k}=1$, we have $w'_{k-1}=w_{k-1}$.
\end{proof}

A word $w$ with $|w|\ge2$ is called a \emph{cycle} if $w_0=w_{|w|-1}$, and a cycle $w$ is said to be \emph{simple} if $w_k\ne w_l$ for $k,l\in\{0,\dots,|w|-2\}$ with $k\ne l$.

\begin{lemma}
\label{lem:305a}
Let $w\in W(\bfA)$ be a cycle.
Then, there exists $k\in\{0,\dots,|w|-1\}$ such that $w_k\in V_0(\bfA)$.
\end{lemma}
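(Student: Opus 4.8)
The plan is to argue by contradiction. Suppose that $w_k\notin V_0(\bfA)$ for every $k\in\{0,\dots,|w|-1\}$. Since $\bfA$ is primitive, $\delta_{\bfA}(j)\ge1$ for all $j$, so this assumption forces $\delta_{\bfA}(w_k)=1$ for each $k$; that is, every state occurring in $w$ has exactly one $\bfA$-predecessor. Let $C=\{w_0,\dots,w_{|w|-1}\}$ be the set of states appearing in the cycle.

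The key structural step is to show that no $\bfA$-admissible edge enters $C$ from its complement. Fix $j\in C$ and write $j=w_k$. Because $w$ is a cycle with $|w|\ge2$ (so $w_0=w_{|w|-1}$), the symbol $j$ is always immediately preceded within $w$ by another symbol of $w$: by $w_{k-1}$ if $k\ge1$, and by $w_{|w|-2}$ if $k=0$. This exhibits a predecessor of $j$ lying in $C$, and since $\delta_{\bfA}(j)=1$ it is the \emph{only} predecessor of $j$. Hence $\bfA_{ij}=0$ whenever $i\notin C$ and $j\in C$.

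I would then close the argument using primitivity. Because $\bfA$ is irreducible, every state must be reachable from every other state; if $C$ were a proper subset of $\{1,\dots,N\}$, then a state of $C$ could never be reached from a state outside $C$ (any admissible path would have to cross into $C$ along a nonexistent edge), a contradiction. Thus $C=\{1,\dots,N\}$, so every column of $\bfA$ contains exactly one nonzero entry. But then every power $\bfA^m$ likewise has exactly one nonzero entry in each column, so no power of $\bfA$ can be strictly positive once $N\ge2$---contradicting primitivity. This completes the proof. I expect the delicate point to be exactly this last step: once the cycle is forced to exhaust all $N$ states the ``trapping'' observation becomes vacuous, and one must instead invoke aperiodicity (a single Hamiltonian cycle on $N\ge2$ vertices is irreducible but not primitive) to reach the contradiction.
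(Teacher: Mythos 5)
Your proof is correct and follows essentially the same route as the paper's: assume no state of the cycle lies in $V_0(\bfA)$, observe that each cycle state's unique predecessor then lies in the cycle itself, and contradict primitivity via the structure of the powers of $\bfA$. The paper packages the two cases you treat separately (the cycle's state set being proper, versus exhausting $\{1,\dots,N\}$) into a single block-matrix form with a persistent zero block over a permutation matrix, but the content is the same, including your correct observation that the Hamiltonian-cycle case requires aperiodicity rather than mere irreducibility.
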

\begin{proof}
Without loss of generality, we may assume that $w$ is simple and that $\{w_0,\dots,w_{|w|-2}\}=\{1,\dots,|w|-1\}$.
Let $\{1,\dots,|w|-1\}\cap V_0(\bfA)=\emptyset$. 
Then, it follows from the definition of $V_0(\bfA)$ that $\bfA$ is of the form $\bfA=\left(
\begin{array}{c|c}
\begin{matrix}
A\\
O
\end{matrix}&*
\end{array}
\right)$, where $A$ is a $(|w|-1)\times(|w|-1)$ permutation matrix and $O$ is the $(N-|w|+1)\times(|w|-1)$ zero matrix (a square zero-one matrix is called a \emph{permutation matrix} if each row and column has only one non-zero entry).
Hence, for any $k\in\N$, the power $\bfA^k$ has at least one entry equal to 0.
Since $\bfA$ is primitive, we obtain a contradiction.
\end{proof}


\section{Sufficient condition for continuity of isomorphism}
\label{sec:cond_continuity}

Let $N\ge2$ be an integer and $\bfA$ an $N\times N$ primitive zero-one matrix.
Let also $\bfB$ be another primitive zero-one matrix.
The aim of this section is to prove the following theorem which gives a sufficient condition for an isomorphism between two systems of Gibbs measures for 2-locally constant functions to be induced from a topological conjugacy.
\begin{theorem}
\label{thm:5100}
Let $\#E_0(\bfA)=\#E_0(\bfB)$ and let $f\in L_2(\TMS,\R)$ satisfy $Q(f)\in\mcG_{\bfA}$.
Let also $g\in L_2(\bTMS,\R)$.
Assume that $(\TMS,\tms,\mu_f)$ and $(\bTMS,\btms,\mu_g)$ are isomorphic.
Then,  any isomorphism $\Phi$ from $(\TMS,\tms,\mu_f)$ to $(\bTMS,\btms,\mu_g)$, there exist a restriction $\Psi$ of $\Phi$ and a topological conjugacy $\bar{\Psi}$ from $(\TMS,\tms)$ to $(\bTMS,\btms)$ such that $\bar{\Psi}$ is a unique continuous extension of $\Psi$ to $\TMS$.
\end{theorem}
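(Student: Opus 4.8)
The plan is to encode each point by the sequence of values that $\widehat{f}$ (resp.\ $\widehat{g}$) takes along its forward orbit, to show that this coding is an injective, continuous, shift-equivariant map, and to prove that the two codings have the same image; the given isomorphism will then agree, on a full-measure set, with the homeomorphism obtained by composing one coding with the inverse of the other. First I would record that $\widehat{f}$ is $2$-locally constant with $\widehat{f}|_{\{\omega:\,\omega_0=i,\,\omega_1=j\}}=\log Q(f)_{ij}$, so that $\widehat{f}(\TMS)=\{0\}\cup\{\log Q(f)_{ij}:ij\in E_0(\bfA)\}$, the nonzero values being $\#E_0(\bfA)$ distinct negative numbers because $Q(f)\in\mcG_{\bfA}$. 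The same holds for $g$, and Lemma \ref{lem:0477}(ii) gives $\widehat{f}(\TMS)=\widehat{g}(\bTMS)=:S$. Comparing the nonzero parts of $S$ and using $\#E_0(\bfA)=\#E_0(\bfB)$ together with Remark \ref{rem:1085}(ii) forces $Q(g)\in\mcG_{\bfB}$, so that Lemma \ref{lem:1723} becomes available on both shifts.

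Next I would define $\pi_f\colon\TMS\to S^{\N\cup\{0\}}$ by $\pi_f(\omega)=(\widehat{f}(\tms^k\omega))_{k\in\N\cup\{0\}}$ and $\pi_g$ analogously. Each map is continuous and intertwines the shift with the left shift on $S^{\N\cup\{0\}}$. The key point is injectivity: if $\pi_f(\omega)=\pi_f(\omega')$ then $Q(f)_{\omega_k\omega_{k+1}}=Q(f)_{\omega'_k\omega'_{k+1}}$ for every $k$; by Lemma \ref{lem:305a} every orbit meets $V_0(\bfA)$ infinitely often, since otherwise a tail would be an infinite path, hence would contain a cycle, inside the subgraph on $\{1,\dots,N\}\setminus V_0(\bfA)$. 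Thus for each $n$ I may pick $m\ge\max(n,1)$ with $\omega_m\in V_0(\bfA)$ and apply Lemma \ref{lem:1723} to $\omega_0\cdots\omega_m$ and $\omega'_0\cdots\omega'_m$ to conclude $\omega_n=\omega'_n$. Being continuous injections from compact spaces, $\pi_f$ and $\pi_g$ are homeomorphisms onto their compact images.

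Finally I would match the images and build the conjugacy. Using Lemma \ref{lem:0477}(i) and the $\tms$-invariance of $\mu_f$, I choose a Borel, $\tms$-forward-invariant, full-measure subset $A$ of the domain of $\Phi$ on which $\widehat{f}(\tms^k\omega)=\widehat{g}(\btms^k\Phi(\omega))$ for all $k$, so that $\pi_f(\omega)=\pi_g(\Phi(\omega))$ there and $\pi_f(A)\subset\pi_g(\bTMS)$. Since $A$ is dense (Remark \ref{rem:3322}) and the images are compact, $\pi_f(\TMS)\subset\pi_g(\bTMS)$; the symmetric argument applied to $\Phi^{-1}$ gives the reverse inclusion, whence $\pi_f(\TMS)=\pi_g(\bTMS)$. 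Then $\bar\Psi:=\pi_g^{-1}\circ\pi_f$ is a homeomorphism from $\TMS$ to $\bTMS$ intertwining $\tms$ and $\btms$, that is, a topological conjugacy, and $\bar\Psi=\Phi$ on $A$. Setting $\Psi:=\Phi|_A$ exhibits a restriction of $\Phi$ whose continuous extension is $\bar\Psi$, and uniqueness of this extension follows from the density of $A$ and the Hausdorff property of $\bTMS$.

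The decisive step is the injectivity of the coding $\pi_f$: this is exactly where the genericity hypothesis $Q(f)\in\mcG_{\bfA}$ enters, through Lemma \ref{lem:1723}, and where Lemma \ref{lem:305a} supplies enough recurrence to $V_0(\bfA)$ to propagate the reconstruction to every coordinate. The remaining ingredients, namely the continuity and equivariance of the codings, the coincidence of their images via measure density and compactness, and the measure-theoretic bookkeeping needed to realize $\Psi$ as a genuine restriction of $\Phi$, are routine.
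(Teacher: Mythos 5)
Your proposal is correct, and it rests on exactly the same pillars as the paper's argument: Lemma \ref{lem:0477} to transport $\widehat f$ to $\widehat g$ along $\Phi$, the counting argument of Lemma \ref{lem:6653} (which you reproduce) to deduce $Q(g)\in\mcG_{\bfB}$, and Lemmas \ref{lem:1723} and \ref{lem:305a} to reconstruct symbols from the sequence of Jacobian values. Where you differ is in the packaging. The paper works directly with $\Phi$: Lemma \ref{lem:4320} shows that on a full-measure forward-invariant set the first $N+n+1$ coordinates of $\omega$ determine the first $n+1$ coordinates of $\Phi\omega$, i.e.\ $\Phi$ is uniformly continuous there; the conjugacy is then obtained by extending over the closure and producing the inverse by running the same argument on $\Phi^{-1}$ (Lemma \ref{lem:8071}). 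You instead build the conjugacy independently of $\Phi$, as $\pi_g^{-1}\circ\pi_f$ for the $\widehat f$- and $\widehat g$-itinerary codings, prove each coding is a homeomorphism onto its image (using that every orbit visits $V_0(\bfA)$ infinitely often, which you correctly extract from Lemma \ref{lem:305a} by a pigeonhole argument on an infinite path), match the two images via density of full-measure sets and compactness, and only then verify agreement with $\Phi$ almost everywhere. The two routes are essentially equivalent in content; yours trades the paper's explicit modulus of continuity $n\mapsto N+n$ for a cleaner global construction that never mentions uniform continuity. One cosmetic slip: $\widehat f(\TMS)=\{0\}\cup\{\log Q(f)_{ij}:ij\in E_0(\bfA)\}$ only when $E(\bfA)\setminus E_0(\bfA)\ne\emptyset$; if every column of $\bfA$ has at least two ones, the value $0$ is not attained. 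This does not affect your argument, which only uses the set of negative values, whose cardinality is $\#E_0(\bfA)$ precisely because $Q(f)\in\mcG_{\bfA}$.
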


Let $f\in L_2(\TMS,\R)$ and $g\in L_2(\bTMS,\R)$.
In the rest of this section, we always assume that $(\TMS,\tms,\mu_f)$ and $(\bTMS,\btms,\mu_g)$ are isomorphic.
Let $\Phi:Y_1\to Y_2$ be an isomorphism from $(\TMS,\tms,\mu_f)$ to $(\bTMS,\btms,\mu_g)$.
We begin the proof of Theorem \ref{thm:5100} with the following lemma:

\begin{lemma}
\label{lem:3725}
The following two assertions hold:
\begin{enumerate}
\item[(i)] There exists a Borel set $Z_1$ of $\TMS$ such that the following two conditions hold:
\begin{enumerate}
\item[(a)] $Z_1\subset Y_1,\ \tms(Z_1)\subset Z_1$ and $\mu_f(Z_1)=1$.
Moreover, $\Phi\circ\tms=\btms\circ\Phi$ on $Z_1$.
\item[(b)]$Q(f)_{\omega_n\omega_{n+1}}=Q(g)_{(\Phi\omega)_n(\Phi\omega)_{n+1}}$ for $\omega\in Z_1$ and $n\in\N\cup\{0\}$.
\end{enumerate} 
\item[(ii)] We have $\{Q(f)_{ij}:ij\in E_0(\bfA)\}=\{Q(g)_{ij}:ij\in E_0(\bfB)\}$.
\end{enumerate}
\end{lemma}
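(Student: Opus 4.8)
The plan is to extract the almost-everywhere cohomological identity of Lemma \ref{lem:0477}(i) and propagate it along forward orbits, then reformulate everything in terms of the column stochastic matrices $Q(f)$ and $Q(g)$. The key preliminary observation is that, since $\widehat{f}$ is $2$-locally constant, the defining relation $Q(f)=A(\widehat{f})$ in (\ref{eq:5211}) gives $\widehat{f}(\omega)=\log Q(f)_{\omega_0\omega_1}$ for every $\omega\in\TMS$ (here $\bfA_{\omega_0\omega_1}=1$, so the entry is positive), and likewise $\widehat{g}(\omega')=\log Q(g)_{\omega'_0\omega'_1}$ for $\omega'\in\bTMS$. Thus the whole lemma is a reformulation, in terms of $Q(f)$ and $Q(g)$, of the identity $\widehat{f}=\widehat{g}\circ\Phi$ holding $\mu_f$-a.e.

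For part (i), I would first take a Borel set $A\subset Y_1$ with $\mu_f(A)=1$ on which $\widehat{f}=\widehat{g}\circ\Phi$ holds, obtained by intersecting the full-measure set from Lemma \ref{lem:0477}(i) with $Y_1$. Then I would set $Z_1=\bigcap_{n=0}^{\infty}\tms^{-n}(A)$. Since $\mu_f$ is $\tms$-invariant, each $\tms^{-n}(A)$ has full measure, so $\mu_f(Z_1)=1$; and $\omega\in Z_1$ forces $\tms^{n}(\tms\omega)=\tms^{n+1}\omega\in A$ for all $n\ge0$, giving $\tms(Z_1)\subset Z_1$. Because $Z_1\subset Y_1$ and $Y_1$ is forward invariant with $\Phi\circ\tms=\btms\circ\Phi$, an easy induction yields $\Phi(\tms^n\omega)=\btms^n(\Phi\omega)$ for $\omega\in Z_1$ and $n\ge0$, which gives (a). For (b) I would fix $\omega\in Z_1$ and $n\ge0$, apply $\widehat{f}=\widehat{g}\circ\Phi$ at the point $\tms^n\omega\in A$, and combine it with the reformulation above and $\Phi(\tms^n\omega)=\btms^n(\Phi\omega)$ to obtain
\[
\log Q(f)_{\omega_n\omega_{n+1}}=\widehat{f}(\tms^n\omega)=\widehat{g}(\btms^n\Phi\omega)=\log Q(g)_{(\Phi\omega)_n(\Phi\omega)_{n+1}};
\]
exponentiating then proves (b).

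For part (ii), I would first invoke Remark \ref{rem:1085}(i) (and its analogue for $\bfB$) to identify $E_0(\bfA)=\{ij\in E(\bfA):Q(f)_{ij}\in(0,1)\}$ and $E_0(\bfB)=\{ij\in E(\bfB):Q(g)_{ij}\in(0,1)\}$, so that the two sets in (ii) are precisely the sets of values of $Q(f)$ and $Q(g)$ lying in $(0,1)$. Fixing $ij\in E_0(\bfA)$, I would use that the $2$-cylinder $\{\omega\in\TMS:\omega_0=i,\ \omega_1=j\}$ is non-empty and open, hence of positive $\mu_f$-measure by Remark \ref{rem:3322}; since $\mu_f(Z_1)=1$, this cylinder meets $Z_1$, so I can pick $\omega\in Z_1$ with $\omega_0=i$, $\omega_1=j$. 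Part (b) with $n=0$ then gives $Q(f)_{ij}=Q(g)_{(\Phi\omega)_0(\Phi\omega)_1}\in(0,1)$, whence $(\Phi\omega)_0(\Phi\omega)_1\in E_0(\bfB)$, so $Q(f)_{ij}$ lies in the right-hand set. The reverse inclusion I would obtain by running the same argument for the inverse isomorphism $\Phi^{-1}$ from $(\bTMS,\btms,\mu_g)$ to $(\TMS,\tms,\mu_f)$, again via Lemma \ref{lem:0477}(i).

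I expect the only genuine difficulty, beyond bookkeeping, to be the passage from an almost-everywhere statement to the two pointwise and combinatorial statements required. For (b) this means arranging the identity to hold simultaneously for all $n$ on one forward-invariant full-measure set, which the intersection $\bigcap_{n}\tms^{-n}(A)$ together with invariance of $\mu_f$ accomplishes; for (ii) it means realizing every admissible edge by a point of $Z_1$, which the positivity of the Gibbs measure on cylinders (Remark \ref{rem:3322}) guarantees. A minor point to verify carefully is that the equivariance $\Phi\circ\tms=\btms\circ\Phi$ really propagates to $\Phi\circ\tms^n=\btms^n\circ\Phi$ along orbits remaining in $Y_1$.
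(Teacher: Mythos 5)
Your proposal is correct and follows the paper's route: the paper's own proof is a one-line reduction to Lemma \ref{lem:0477}, and your argument supplies exactly the details that reduction presupposes (the identification $\widehat{f}(\omega)=\log Q(f)_{\omega_0\omega_1}$ coming from $Q(f)=A(\widehat{f})$, the forward-invariant full-measure set $\bigcap_{n\ge0}\tms^{-n}(A)$, and the positivity of Gibbs measures on cylinders from Remark \ref{rem:3322}). The only deviation is in part (ii), where the paper cites Lemma \ref{lem:0477}(ii) (the range equality $\widehat{f}(\TMS)=\widehat{g}(\bTMS)$, intersected with $(0,1)$ via Remark \ref{rem:1085}(i)) whereas you rederive the same equality from (i)(b) together with the inverse isomorphism $\Phi^{-1}$; both derivations are valid and rest on the same underlying facts.
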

\begin{proof}
(i) and (ii) follow from (i) and (ii) of Lemma \ref{lem:0477}, respectively.
\end{proof}

The next lemma is a key step in the proof of Theorem \ref{thm:5100}.

\begin{lemma}
\label{lem:4320}
Let $Z_1$ be as in Lemma \ref{lem:3725} (i).
If $Q(g)\in\mcG_{\bfB}$, then, for any $n\in\N\cup\{0\}$ and $\omega,\omega'\in Z_1$ with $\omega_0\cdots\omega_{N+n}=\omega'_0\cdots\omega'_{N+n}$, we have $(\Phi\omega)_0\cdots(\Phi\omega)_n=(\Phi\omega')_0\cdots(\Phi\omega')_n$.
\end{lemma}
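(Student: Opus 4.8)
The plan is to reduce the statement to a single application of Lemma \ref{lem:1723}. Fix $n\in\N\cup\{0\}$ and $\omega,\omega'\in Z_1$ with $\omega_0\cdots\omega_{N+n}=\omega'_0\cdots\omega'_{N+n}$, and abbreviate the coordinate words of the images by $u_m=(\Phi\omega)_m$ and $u'_m=(\Phi\omega')_m$. Since $\Phi\omega,\Phi\omega'\in\bTMS$, every finite subword of $u$ and of $u'$ is $\bfB$-admissible. First I would record the matching of the $Q(g)$-weights along these two words: for $m\in\{0,\dots,N+n-1\}$, Lemma \ref{lem:3725} (i)(b) gives $Q(g)_{u_mu_{m+1}}=Q(f)_{\omega_m\omega_{m+1}}$ and $Q(g)_{u'_mu'_{m+1}}=Q(f)_{\omega'_m\omega'_{m+1}}$, and since $\omega,\omega'$ agree on coordinates $0,\dots,N+n$ the two right-hand sides coincide. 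Hence $Q(g)_{u_mu_{m+1}}=Q(g)_{u'_mu'_{m+1}}$ for every $m\in\{0,\dots,N+n-1\}$.

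The heart of the argument is to locate an index $p$ with $n\le p\le N+n$ and $p\ge1$ at which $u_p\in V_0(\bfB)$; this is exactly where the $N$ surplus coordinates are used. The word $u_nu_{n+1}\cdots u_{N+n}$ has $N+1$ letters drawn from the $N$ symbols $\{1,\dots,N\}$, so by the pigeonhole principle there are indices $n\le i<j\le N+n$ with $u_i=u_j$. Then $u_iu_{i+1}\cdots u_j$ is a cycle in $W(\bfB)$, and by Lemma \ref{lem:305a} there is a position $l\in\{i,\dots,j\}$ with $u_l\in V_0(\bfB)$. If $l\ge1$ I set $p=l$; otherwise $l=i=0$, and since $u_i=u_j$ I set $p=j$, noting that $u_j=u_i=u_l\in V_0(\bfB)$ and $j\ge1$. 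In every case $n\le p\le N+n$, $p\ge1$, and $u_p\in V_0(\bfB)$.

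Finally, I would apply Lemma \ref{lem:1723} (with the matrix $\bfB$ and $Q(g)\in\mcG_{\bfB}$) to the two $\bfB$-admissible words $w=u_0\cdots u_p$ and $w'=u'_0\cdots u'_p$. These have equal length $p+1\ge2$, their last letter $u_p$ lies in $V_0(\bfB)$, and the weight-matching established above holds for all edges $k\in\{0,\dots,p-1\}$ since $p-1\le N+n-1$. The lemma therefore yields $w=w'$, i.e.\ $u_0\cdots u_p=u'_0\cdots u'_p$, and because $p\ge n$ this already contains the desired equality $(\Phi\omega)_0\cdots(\Phi\omega)_n=(\Phi\omega')_0\cdots(\Phi\omega')_n$. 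I expect the main obstacle to be the middle step: Lemma \ref{lem:1723} can be anchored only at a coordinate whose value lies in $V_0(\bfB)$, so one must guarantee such an anchor at a position in $\{\max(n,1),\dots,N+n\}$, and it is precisely the pigeonhole-plus-cycle argument, together with the ``later occurrence'' trick used to avoid position $0$, that produces it.
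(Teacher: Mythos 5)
Your overall strategy is the paper's: match the $Q(g)$-weights along the two image words via Lemma \ref{lem:3725} (i)(b), locate an anchor coordinate lying in $V_0(\bfB)$, and finish with Lemma \ref{lem:1723}. The weight-matching step and the final application of Lemma \ref{lem:1723} are correct. The gap is in the middle step: you run the pigeonhole argument on the \emph{image} word $u_n\cdots u_{N+n}$ and assert that it has $N+1$ letters drawn from $N$ symbols. But $N$ is the size of $\bfA$, whereas $u=\Phi\omega$ lives in $\bTMS$, and $\bfB$ is only assumed to be ``another primitive zero-one matrix''---nothing in Section \ref{sec:cond_continuity} forces it to be $N\times N$. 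If the alphabet of $\bfB$ has more than $N$ symbols, the $N+1$ letters $u_n,\dots,u_{N+n}$ may all be distinct, no cycle appears, and your construction of the anchor $p$ with $u_p\in V_0(\bfB)$ collapses. The number of surplus coordinates in the hypothesis is calibrated to the alphabet of $\TMS$, not of $\bTMS$.

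The repair is to find the anchor on the source side and transport it across $\Phi$, which is what the paper does. Apply your pigeonhole-plus-cycle argument (including the ``later occurrence'' trick, which the paper leaves implicit) to $\omega_n\cdots\omega_{N+n}$, whose letters genuinely come from $\{1,\dots,N\}$; Lemma \ref{lem:305a} then yields $p$ with $\max(n,1)\le p\le N+n$ and $\omega_p\in V_0(\bfA)$, hence $\omega_{p-1}\omega_p\in E_0(\bfA)$ and $Q(f)_{\omega_{p-1}\omega_p}\in(0,1)$ by Remark \ref{rem:1085} (i). The identity $Q(g)_{u_{p-1}u_p}=Q(f)_{\omega_{p-1}\omega_p}$ then places $Q(g)_{u_{p-1}u_p}$ in $(0,1)$, and Remark \ref{rem:1085} (i) applied to $\bfB$ gives $u_p\in V_0(\bfB)$. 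With that substitution the rest of your argument goes through verbatim. (The paper also first reduces to $n=0$ using $\tms(Z_1)\subset Z_1$ and equivariance; your direct treatment of general $n$ is fine and not the issue.)
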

\begin{proof}
Without loss of generality, we may assume that $n=0$.
From Lemma \ref{lem:305a}, there exists $k_0\in\{1,\dots,N\}$ such that $\omega_{k_0}\in V_0(\bfA)$.
Moreover, 
\begin{equation}
\label{eq:9175}
\begin{aligned}
Q(g)_{(\Phi\omega)_k(\Phi\omega)_{k+1}}=&Q(f)_{\omega_k\omega_{k+1}}\\
=&Q(f)_{\omega'_k\omega'_{k+1}}=Q(g)_{(\Phi\omega')_k(\Phi\omega')_{k+1}},\quad k\in\{0,\dots,k_0-1\}.
\end{aligned}
\end{equation}
From (\ref{eq:9175}) and Remark \ref{rem:1085} (i), $Q(g)_{(\Phi\omega)_{k_0-1}(\Phi\omega)_{k_0}}=Q(f)_{\omega_{k_0-1}\omega_{k_0}}\in(0,1)$.
Hence, again from Remark \ref{rem:1085} (i), $(\Phi\omega)_{k_0}\in V_0(\bfB)$.
Thus, from (\ref{eq:9175}) and Lemma \ref{lem:1723}, $(\Phi\omega)_0\cdots(\Phi\omega)_{k_0}=(\Phi\omega')_0\cdots(\Phi\omega')_{k_0}$.
In particular, $(\Phi\omega)_0=(\Phi\omega')_0$.
\end{proof}

Let $X_1$ and $X_2$ be topological spaces.
Let also $T_i:X_i\to X_i$ be a continuous map, for $i\in\{1,2\}$.
A continuous surjection $\tau:X_1\to X_2$ is called a \emph{topological semi-conjugacy} from $(X_1,T_1)$ to $(X_2,T_2)$ if $\tau\circ T_1=T_2\circ\tau$ on $X_1$.
The following lemma is a direct consequence of Lemma \ref{lem:4320}.

\begin{lemma}
\label{lem:8071}
If $Q(g)\in\mcG_{\bfB}$, then the following two assertions hold:
\begin{enumerate}
\item[(i)] There exist a restriction $\Psi$ of $\Phi$ and a topological semi-conjugacy $\bar{\Psi}$ from $(\TMS,\tms)$ to $(\bTMS,\btms)$ such that $\bar{\Psi}$ is a unique continuous extension of $\Psi$ to $\TMS$.
\item[(ii)]Let $\bar{\Psi}$ be as in (i).
If $Q(f)\in\mcG_{\bfA}$, then $\bar{\Psi}$ is a topological conjugacy from $(\TMS,\tms)$ to $(\bTMS,\btms)$. 
\end{enumerate}
\end{lemma}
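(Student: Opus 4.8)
The plan is to build the extension $\bar\Psi$ block by block, using Lemma \ref{lem:4320} to guarantee that each finite initial word of $\bar\Psi\omega$ is unambiguously determined by a finite initial word of $\omega$, and then to deduce (ii) by applying (i) to the inverse isomorphism $\Phi^{-1}$. For (i), let $Z_1$ be as in Lemma \ref{lem:3725} (i). Since $\mu_f(Z_1)=1$, Remark \ref{rem:3322} shows that $Z_1$ meets every non-empty cylinder of $\TMS$; in particular every $\bfA$-admissible word is the initial word of some point of $Z_1$. Fix $\omega\in\TMS$ and $n\in\N\cup\{0\}$. Choosing any $\omega'\in Z_1$ whose first $N+n+1$ symbols coincide with those of $\omega$, I would read off the word $(\Phi\omega')_0\cdots(\Phi\omega')_n$; by Lemma \ref{lem:4320} this word is independent of the choice of $\omega'$, so it defines a block $\psi_n(\omega)$. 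Selecting $\omega'$ adapted to the longer prefix $\omega_0\cdots\omega_{N+n+1}$ shows that $\psi_n(\omega)$ is the length-$(n+1)$ prefix of $\psi_{n+1}(\omega)$, so the blocks cohere and determine a single sequence $\bar\Psi\omega$ with $(\bar\Psi\omega)_0\cdots(\bar\Psi\omega)_n=\psi_n(\omega)$.

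I would then verify the required properties one by one. Each $\psi_n(\omega)$ is an initial word of the point $\Phi\omega'\in\bTMS$, hence $\bfB$-admissible, so $\bar\Psi\omega\in\bTMS$. Continuity is immediate from the construction, since $\omega$ and $\eta$ agreeing on their first $N+n+1$ symbols forces $\bar\Psi\omega$ and $\bar\Psi\eta$ to agree on their first $n+1$ symbols. Taking $\omega'=\omega$ for $\omega\in Z_1$ gives $\bar\Psi|_{Z_1}=\Phi|_{Z_1}$, so with $\Psi:=\Phi|_{Z_1}$, which is a restriction of $\Phi$ because $\tms(Z_1)\subset Z_1$, $\mu_f(Z_1)=1$ and $\mu_g(\Phi(Z_1))=\mu_f(Z_1)=1$, the map $\bar\Psi$ is a continuous extension of $\Psi$; as $Z_1$ is dense in $\TMS$ and $\bTMS$ is Hausdorff, it is the unique one. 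The identity $\Phi\circ\tms=\btms\circ\Phi$ on $Z_1$ reads $\bar\Psi\circ\tms=\btms\circ\bar\Psi$ on the dense set $Z_1$, and both sides are continuous, so the relation holds on all of $\TMS$. Finally $\bar\Psi(\TMS)$ is compact, hence closed, and contains $\Phi(Z_1)$, which has full $\mu_g$-measure and is therefore dense by Remark \ref{rem:3322}; a closed dense set equals $\bTMS$, so $\bar\Psi$ is onto and thus a topological semi-conjugacy.

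For (ii) I would exploit the symmetry of the situation. The inverse $\Phi^{-1}$ is an isomorphism from $(\bTMS,\btms,\mu_g)$ to $(\TMS,\tms,\mu_f)$, and the extra assumption $Q(f)\in\mcG_{\bfA}$ is precisely the hypothesis of (i) after interchanging the roles of $\bfA,f$ and $\bfB,g$. Applying (i) in this reversed setting yields a topological semi-conjugacy $\bar\Theta$ from $(\bTMS,\btms)$ to $(\TMS,\tms)$ extending $\Phi^{-1}|_{Z_2}$ for some Borel set $Z_2\subset\bTMS$ with $\mu_g(Z_2)=1$. On the full-measure set $Z_1\cap\Phi^{-1}(Z_2)$ one has $\bar\Theta\circ\bar\Psi=\Phi^{-1}\circ\Phi=\id$, and by density together with continuity this gives $\bar\Theta\circ\bar\Psi=\id$ on $\TMS$; the symmetric argument gives $\bar\Psi\circ\bar\Theta=\id$ on $\bTMS$. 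Hence $\bar\Psi$ is a homeomorphism intertwining the two shifts, that is, a topological conjugacy.

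The main obstacle is the well-definedness and coherence of the block map $\psi_n$ underlying $\bar\Psi$: one must simultaneously produce, for every admissible prefix, a point of $Z_1$ realizing it (which is where the positivity of Gibbs measures on cylinders in Remark \ref{rem:3322} enters) and rule out any dependence on that choice (which is exactly the content of Lemma \ref{lem:4320}), while confirming that lengthening the prefix merely lengthens the image block. Once this is settled, admissibility, continuity, the intertwining relation, surjectivity, and the inversion argument in (ii) are all routine density-and-continuity verifications.
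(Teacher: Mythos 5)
Your proposal is correct and follows essentially the same route as the paper: the paper also takes $\Psi=\Phi|_{Z_1}$, uses Lemma \ref{lem:4320} to get (uniform) continuity on the dense set $Z_1$, extends to $\TMS$ by density, and proves (ii) by applying (i) to $\Phi^{-1}$ and composing the two extensions on a dense full-measure set. Your explicit block-by-block construction of $\bar\Psi$ and the compactness argument for surjectivity just spell out details the paper leaves as ``easy to see.''
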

\begin{proof}
(i)\ 
Let $Z_1$ be as in Lemma \ref{lem:3725} (i) and let $\Psi=\Phi|_{Z_1}$.
Then, $\Psi$ is a restriction of $\Phi$.
Lemma \ref{lem:4320} implies that $\Psi$ is uniformly continuous on $Z_1$.
Moreover, Remark \ref{rem:3322} implies that $Z_1$ is dense in $\TMS$.
Thus, there exists a unique continuous extension $\bar{\Psi}$ of $\Psi$ to $\TMS$.
It is easy to see that $\bar{\Psi}$ is a topological semi-conjugacy from $(\TMS,\tms)$ to $(\bTMS,\btms)$.

(ii)\ 
Since $Q(f)\in\mcG_{\bfA}$, from (i), there exist a restriction $\Upsilon$ of $\Phi^{-1}$ and a topological semi-conjugacy $\bar{\Upsilon}$ from $(\bTMS,\btms)$ to $(\TMS,\tms)$ such that $\bar{\Upsilon}$ is a unique continuous extension of $\Upsilon$ to $\bTMS$.
It is easy to see that there exists a Borel set $W$ of $\TMS$ with $\mu_f(W)=1$ such that $\bar{\Upsilon}\bar{\Psi}=\id$ on $W$.
From Remark \ref{rem:3322}, $W$ is dense in $\TMS$.
Thus, it follows from the continuity of $\bar{\Psi}$ and $\bar{\Upsilon}$ that $\bar{\Upsilon}\bar{\Psi}=\id$ on $\TMS$.
Similarly, $\bar{\Psi}\bar{\Upsilon}=\id$ on $\bTMS$.
Thus, $\bar{\Psi}$ is a homeomorphism from $\TMS$ to $\bTMS$, and hence, is a topological conjugacy from $(\TMS,\tms)$ to $(\bTMS,\btms)$. 
\end{proof}

For the proof of Theorem \ref{thm:5100}, we need the next lemma.

\begin{lemma}
\label{lem:6653}
Let $\#E_0(\bfA)=\#E_0(\bfB)$.
Then, $Q(f)\in\mcG_{\bfA}$ if and only if $Q(g)\in\mcG_{\bfB}$.
\end{lemma}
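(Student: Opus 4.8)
The plan is to reduce the statement to a pure counting argument that combines Lemma \ref{lem:3725} (ii) with the characterization of $\mcG_{\bfA}$ given in Remark \ref{rem:1085} (ii). First I would invoke Lemma \ref{lem:3725} (ii), which under the standing isomorphism assumption yields the equality of value sets
\[
\{Q(f)_{ij}:ij\in E_0(\bfA)\}=\{Q(g)_{ij}:ij\in E_0(\bfB)\}.
\]
Calling this common set $S$, I immediately get $\#\{Q(f)_{ij}:ij\in E_0(\bfA)\}=\#S=\#\{Q(g)_{ij}:ij\in E_0(\bfB)\}$.

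Next I would recall the membership criterion from Remark \ref{rem:1085} (ii): since $Q(f)$ and $Q(g)$ are column stochastic, we have $Q(f)\in\mcG_{\bfA}$ if and only if $\#\{Q(f)_{ij}:ij\in E_0(\bfA)\}=\#E_0(\bfA)$, and likewise $Q(g)\in\mcG_{\bfB}$ if and only if $\#\{Q(g)_{ij}:ij\in E_0(\bfB)\}=\#E_0(\bfB)$.

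Finally I would chain these equivalences through the hypothesis $\#E_0(\bfA)=\#E_0(\bfB)$. Assuming $Q(f)\in\mcG_{\bfA}$, the criterion gives $\#S=\#E_0(\bfA)=\#E_0(\bfB)$; since $\#S=\#\{Q(g)_{ij}:ij\in E_0(\bfB)\}$, the criterion applied to $g$ then forces $Q(g)\in\mcG_{\bfB}$. The reverse implication is the same argument with the roles of $f$ and $g$ interchanged. I do not expect a genuine obstacle in this lemma: the whole content is bookkeeping with cardinalities, and the only substantive input is the value-set equality of Lemma \ref{lem:3725} (ii), which itself rests on the dynamical identity $\widehat{f}(\TMS)=\widehat{g}(\bTMS)$ from Lemma \ref{lem:0477} (ii). The role of the hypothesis $\#E_0(\bfA)=\#E_0(\bfB)$ is precisely to convert the common cardinality $\#S$ into the two \emph{separate} equalities characterizing membership in $\mcG_{\bfA}$ and in $\mcG_{\bfB}$; without it, the two ``maximal distinctness'' conditions would not be comparable.
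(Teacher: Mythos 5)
Your argument is correct and is essentially identical to the paper's proof: both reduce the lemma to the value-set equality of Lemma \ref{lem:3725} (ii) combined with the cardinality criterion for membership in $\mcG_{\bfA}$ from Remark \ref{rem:1085} (ii), then use the hypothesis $\#E_0(\bfA)=\#E_0(\bfB)$ to transfer the equality across, with the converse following by symmetry. No gaps.
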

\begin{proof}
It is enough to prove the `only if' part. 
Since $Q(f)\in\mcG_{\bfA}$, we have $\#\{Q(g)_{ij}:ij\in E_0(\bfB)\}=\#E_0(\bfA)$ by Lemma \ref{lem:3725} (ii) and Remark \ref{rem:1085} (ii).
Thus, we have $Q(g)\in\mcG_{\bfB}$ by $\#E_0(\bfA)=\#E_0(\bfB)$ and Remark \ref{rem:1085} (ii).
\end{proof}

Now, we are ready to prove Theorem \ref{thm:5100}.

\begin{proof}[Proof of Theorem \ref{thm:5100}]
It follows from Lemma \ref{lem:6653} that $Q(g)\in\mcG_{\bfB}$.
Thus, the assertion follows from Lemma \ref{lem:8071} (ii).
\end{proof}


\section{Proof of Theorem \ref{thm:7724}}
\label{sec:main}

In this section, we prove Theorem \ref{thm:7724} which is the main result of this paper.

Let $N\ge2$ be an integer and $\bfA$ an $N\times N$ primitive zero-one matrix.
Let $\mcC=L_2(\TMS,\R)$.
Recall from Section \ref{sec:introduction} that $\Leb$ denotes the Lebesgue measure on the real analytic manifold $\mcC$.
We set
\begin{equation}
\label{eq:3048}
G=\{f\in\mcC:Q(f)\in\mcG_{\bfA}\}.
\end{equation}

We begin the proof of Theorem \ref{thm:7724} with the following lemma:

\begin{lemma}
\label{lem:0036}
Let $G$ be as in (\ref{eq:3048}).
Then, $G$ is open in $\mcC$ and $\Leb(\mcC\setminus G)=0$.
\end{lemma}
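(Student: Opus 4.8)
The plan is to identify $\mcC=L_2(\TMS,\R)$ with $\R^{\#E(\bfA)}$ via the coordinates $f_{ij}=f|_{\{\omega\in\TMS:\,\omega_0=i,\ \omega_1=j\}}$, $ij\in E(\bfA)$, so that $\mcC$ is a connected real analytic manifold, and then to reduce the statement to an elementary fact about zero sets of real analytic functions. Since $Q(f)\in\mcM_{\bfA}$ always holds (as noted after (\ref{eq:5211})), the condition $f\in G$ just requires the finitely many numbers $Q(f)_{ij}$, $ij\in E_0(\bfA)$, to be pairwise distinct, so that
\[
\mcC\setminus G=\bigcup\{f\in\mcC:Q(f)_{ij}=Q(f)_{kl}\},
\]
the union being taken over the finitely many pairs of distinct $ij,kl\in E_0(\bfA)$. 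Both conclusions will follow once I control the regularity of $f\mapsto Q(f)$ and show that $G$ is non-empty.

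First I would verify that $f\mapsto Q(f)$ is real analytic. Each entry $A(f)_{ij}=\bfA_{ij}\exp(f_{ij})$ is real analytic in $f$, and by Theorem \ref{thm:790}(i) and Proposition \ref{prop:2018}(ii) the Perron root $\lambda_f$ is a \emph{simple} dominant eigenvalue of $A(f)$. Hence $\lambda_f$ and the normalised left Perron eigenvector $p(f)$ (with $p(f)_i>0$ and $\sum_i p(f)_i=1$) depend real analytically on the entries of $A(f)$, and thus on $f$: concretely, $\lambda_f$ is an analytic root of the characteristic polynomial by the implicit function theorem, while a left eigenvector is obtained analytically from a nonzero row of the adjugate of $\lambda_f I-A(f)$, normalised by its coordinate sum. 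As $\lambda_f>0$ and $p(f)_j>0$, formula (\ref{eq:5211}) then exhibits each $Q(f)_{ij}$ as a real analytic, in particular continuous, function of $f$. Consequently each set $\{f:Q(f)_{ij}=Q(f)_{kl}\}$ is closed, and $G$, being the complement of a finite union of such sets, is open.

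For the measure estimate I would note that $\mcC\setminus G$ is the finite union of the zero sets of the real analytic functions $h_{ij,kl}:=Q(f)_{ij}-Q(f)_{kl}$. Since $\mcC$ is connected, the zero set of each $h_{ij,kl}$ has zero Lebesgue measure unless $h_{ij,kl}\equiv0$; thus it suffices to exhibit a single $f_0\in G$, because then $h_{ij,kl}(f_0)\ne0$ for every pair, so no $h_{ij,kl}$ vanishes identically and $\Leb(\mcC\setminus G)=0$ follows.

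The remaining point — and the one I expect to be the main obstacle — is therefore the non-emptiness of $G$. Here I would use that $f\mapsto Q(f)$ maps onto $\mcM_{\bfA}$: given $Q\in\mcM_{\bfA}$, putting $f_{ij}=\log Q_{ij}$ on $E(\bfA)$ gives $A(f)=Q$, whose Perron root is $1$ with left eigenvector $(1,\dots,1)$ by Remark \ref{rem:9599}, so that $\lambda_f=1$, $p(f)=(1/N,\dots,1/N)$, and (\ref{eq:5211}) yields $Q(f)=Q$. It therefore suffices to produce one matrix in $\mcG_{\bfA}$. As $\bfA$ is primitive it admits cycles, so Lemma \ref{lem:305a} gives $V_0(\bfA)\ne\emptyset$ and hence $E_0(\bfA)\ne\emptyset$ (if $E_0(\bfA)=\emptyset$ then $\mcG_{\bfA}=\mcM_{\bfA}$, $G=\mcC$, and there is nothing to prove). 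Assigning the forced entry $1$ in each column $j$ with $\delta_{\bfA}(j)=1$, and choosing in each column with $\delta_{\bfA}(j)\ge2$ positive entries summing to $1$ so that all entries indexed by $E_0(\bfA)$ are pairwise distinct — possible because these finitely many distinctness conditions remove only hyperplanes from a product of positive-dimensional simplices — produces a matrix of $\mcG_{\bfA}$; pulling it back gives the required $f_0\in G$.
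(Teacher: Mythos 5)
Your proposal is correct and follows essentially the same route as the paper: decompose $\mcC\setminus G$ into finitely many zero sets of real analytic functions (analyticity coming from the simplicity of the Perron root, which the paper delegates to \cite[Lemma 5.2]{Nakagawa DS 2021}), and rule out identical vanishing by evaluating at functions $f$ with $A(f)$ column stochastic, where Remark \ref{rem:9599} forces $p(f)=(1/N,\dots,1/N)$. The only cosmetic difference is that you exhibit a single $f_0\in G$ by constructing a matrix of $\mcG_{\bfA}$ directly, while the paper shows each function $F$ is non-constant by producing two column stochastic matrices with different entry ratios; both settle the non-vanishing equally well.
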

\begin{proof}
The proof is the same as that of \cite[Lemma 5.6]{Nakagawa DS 2021}.
For $ij,kl\in E_0(\bfA)$ with $ij\neq kl$, we set
\[
G(ij;kl)=\left\{f\in\mcC:\frac{A(f)_{ij}}{A(f)_{kl}}-\frac{p(f)_j\,p(f)_k}{p(f)_i\,p(f)_l}\neq0\right\}.
\]
Then, by (\ref{eq:5211}), $G=\bigcap_{ij,\,kl\in E_0(\bfA):\,ij\neq kl}G(ij;kl)$.
Thus, it is enough to prove that $G(ij;kl)$ is open in $\mcC$ and that $\Leb(\mcC\setminus G(ij;kl))=0$, for $ij,kl\in E_0(\bfA)$ with $ij\neq kl$.

Fix $ij,kl\in E_0(\bfA)$ with $ij\ne kl$.
We define the map $F:\mcC\to\R$ by $F(f)=(A(f)_{ij}/A(f)_{kl})-[(p(f)_j\,p(f)_k)/(p(f)_i\,p(f)_l)]$.
Then, from \cite[Lemma 5.2]{Nakagawa DS 2021}, $F$ is real analytic, and hence, is continuous.
Thus, $G(ij;kl)$ is open in $\mcC$.
Moreover, it follows from the definition of $E_0(\bfA)$ that there exist $f,g\in\mcC$ such that $A(f)=Q(f),\ A(g)=Q(g)$ and $A(f)_{ij}/A(f)_{kl}\ne A(g)_{ij}/A(g)_{kl}$.
Since both $A(f)$ and $A(g)$ are column stochastic matrices, from Remark \ref{rem:9599}, $p(f)=p(g)=(1/N,\dots,1/N)$.
Thus, $F(f)\ne F(g)$, and hence, $F$ is a non-zero real analytic map.
The zero set of a non-zero real analytic map is contained in a countable union of hypersurfaces; see, e.g., \cite[Claims 1 and 2]{Mityagin}.
Therefore, $\mcC\setminus G(ij;kl)=\{f\in\mcC:F(f)=0\}$ is contained in a countable union of hypersurfaces in $\mcC$, and hence, $\Leb(\mcC\setminus G(ij;kl))=0$.
We obtain the desired result.
\end{proof}

The following lemma is a direct consequence of Theorem \ref{thm:5100}.

\begin{lemma}
\label{lem:4054}
Let $G$ be as in (\ref{eq:3048}).
Then, any $f\in G$ satisfies the property (P).
\end{lemma}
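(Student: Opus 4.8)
The plan is to obtain this lemma as an immediate consequence of Theorem \ref{thm:5100}, specialized to the degenerate case $\bfB=\bfA$. First I would fix an arbitrary $f\in G$, so that by the definition \eqref{eq:3048} of $G$ we have $Q(f)\in\mcG_{\bfA}$. To verify that $f$ enjoys property (P), I take any $g\in[f]_3$ together with any isomorphism $\Phi$ from $(\TMS,\tms,\mu_f)$ to $(\TMS,\tms,\mu_g)$. By the definition of $[f]_3$, the systems $(\TMS,\tms,\mu_f)$ and $(\TMS,\tms,\mu_g)$ are isomorphic; property (P) is the assertion that the desired restriction and extension exist for every such $g$ and every such $\Phi$, so it suffices to treat one arbitrary pair.

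Next I would apply Theorem \ref{thm:5100} with the choice $\bfB=\bfA$ (so that $\bTMS=\TMS$ and $\btms=\tms$). Under this choice the hypothesis $\#E_0(\bfA)=\#E_0(\bfB)$ holds trivially, and $g\in L_2(\TMS,\R)=L_2(\bTMS,\R)$; together with $Q(f)\in\mcG_{\bfA}$ and the assumed isomorphism, all hypotheses of Theorem \ref{thm:5100} are satisfied. The theorem then furnishes a restriction $\Psi$ of $\Phi$ and a topological conjugacy $\bar{\Psi}$ from $(\TMS,\tms)$ to $(\bTMS,\btms)=(\TMS,\tms)$ such that $\bar{\Psi}$ is the unique continuous extension of $\Psi$ to $\TMS$.

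Finally, I would invoke the definition of an automorphism recalled in Section \ref{sec:introduction}: a topological conjugacy from $(\TMS,\tms)$ to itself is precisely an automorphism of $\TMS$. Hence the conjugacy $\bar{\Psi}$ produced above is an automorphism of $\TMS$, and the pair $(\Psi,\bar{\Psi})$ is exactly what property (P) demands. As $g$ and $\Phi$ were arbitrary, $f$ satisfies (P). There is no genuine obstacle in this argument; the only points requiring care are the bookkeeping needed to match the hypotheses of Theorem \ref{thm:5100} in the case $\bfB=\bfA$, and the observation that a self-conjugacy of $(\TMS,\tms)$ is, by definition, an automorphism of $\TMS$.
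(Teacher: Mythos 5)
Your proposal is correct and follows exactly the paper's own argument: both apply Theorem \ref{thm:5100} with $\bfB=\bfA$ (so the hypothesis $\#E_0(\bfA)=\#E_0(\bfB)$ is automatic) and observe that the resulting self-conjugacy of $(\TMS,\tms)$ is by definition an automorphism. The extra bookkeeping you spell out is implicit in the paper's shorter proof.
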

\begin{proof}
Let $g\in[f]_3$ and let $\Phi$ be an isomorphism from $(\TMS,\tms,\mu_f)$ to $(\TMS,\tms,\mu_g)$.
Then, $Q(f)\in\mcG_{\bfA}$, and hence, from Theorem \ref{thm:5100}, there exist a restriction $\Psi$ of $\Phi$ and an automorphism $\bar{\Psi}$ of $\TMS$ such that $\bar{\Psi}$ is a unique continuous extension of $\Psi$ to $\TMS$.
Hence, $f$ satisfies the property (P).
\end{proof}

Now, we are ready to prove Theorem \ref{thm:7724}.

\begin{proof}[Proof of Theorem \ref{thm:7724}]
From Lemmas \ref{lem:0036} and \ref{lem:4054}, the assertion holds for $G$ in (\ref{eq:3048}).
\end{proof}
%


\section{Example of strong non-rigidity}
\label{sec:non-rigidity}

Let $N\ge2$ be an integer and $\bfA$ an $N\times N$ primitive zero-one matrix.
Let also $\mcC=L_2(\TMS,\R)$.
The first aim of this section is to prove Theorem \ref{thm:5066} in Section \ref{sec:introduction}.

\begin{proof}[Proof of Theorem \ref{thm:5066}]
As mentioned in Section \ref{sec:introduction}, the `only if' part follows from (\ref{eq:1908}).
We prove the `if' part.
Assume that (WNR) occurs.
Then, there exists an open set $G'$ of $\mcC$ such that $G'\subset\mcC\setminus\mcC_{\equirigid}$ and $\Leb(\mcC\setminus G')=0$.
Let $G$ be as in Theorem \ref{thm:7724} and let $G''=G\cap G'$.
Then, $G''$ is open in $\mcC$ and $\Leb(\mcC\setminus G'')=0$.
Moreover, if $f\in G''$, then $f$ satisfies the property (P), and hence, $[f]_2=[f]_3$.
Therefore, $G''\subset\mcC\setminus\mcC_{\isorigid}$, and hence, (SNR) occurs.
\end{proof}

The second aim of this section is to prove the following theorem which states that the strong non-rigidity (SNR) occurs for the matrix $\bfA$ in (\ref{eq:2978}).

\begin{theorem}
\label{thm:0426}
Let $\bfA$ be as in (\ref{eq:2978}).
Then, the strong non-rigidity (SNR) occurs.
\end{theorem}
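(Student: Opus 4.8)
The plan is to derive Theorem \ref{thm:0426} from the `if' part of Theorem \ref{thm:5066}: it suffices to exhibit the weak non-rigidity (WNR), i.e.\ an open set $G\subset\mcC$ with $\Leb(\mcC\setminus G)=0$ on which $[f]_1\neq[f]_2$. The idea is, for each $f$, to build a companion $g$ with $\mcE^{(\mu_f)}=\mcE^{(\mu_g)}$ (so $g\in[f]_1$) which is not even isomorphic to $f$ (so $g\notin[f]_3\supset[f]_2$). First I would record the combinatorics of $\bfA$ in (\ref{eq:2978}): columns $1$ and $3$ each have a single nonzero entry, so every $Q\in\mcM_{\bfA}$ has $Q_{21}=Q_{13}=1$, the remaining free entries being $Q_{12},Q_{32},Q_{42}$ and $Q_{14},Q_{24}$. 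The simple cycles are the two of length two, $1\to2\to1$ and $2\to4\to2$, with weights $a:=Q_{12}Q_{21}$ and $d:=Q_{24}Q_{42}$, and the two of length three, $1\to3\to2\to1$ and $1\to4\to2\to1$, with weights $b:=Q_{13}Q_{32}Q_{21}$ and $e:=Q_{14}Q_{42}Q_{21}$; crucially no simple cycle meets both $3$ and $4$.

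The key point is that $\mcE^{(\mu_f)}$ depends only on the multisets $\{a,d\}$ and $\{b,e\}$ of $Q=Q(f)$. For $q\in\R$ write $Q^{(q)}$ for the entrywise $q$-th power of $Q$ on the support of $\bfA$; by Proposition \ref{prop:2018}(ii) its Perron root satisfies $\log\rho(Q^{(q)})=P(\tms,q\widehat{f})$. Expanding $\det(\lambda I-Q^{(q)})$ over vertex-disjoint unions of cycles, and using that $\bfA$ has zero diagonal and admits no linear subgraph covering all four vertices, I obtain
\[
\det(\lambda I-Q^{(q)})=\lambda^{4}-(a^{q}+d^{q})\lambda^{2}-(b^{q}+e^{q})\lambda,
\]
so $\rho(Q^{(q)})$ is the largest root of $\lambda^{3}-(a^{q}+d^{q})\lambda-(b^{q}+e^{q})=0$ and depends only on $\{a,d\}$ and $\{b,e\}$. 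Since the entropy spectrum is the Legendre transform of the pressure function $q\mapsto P(\tms,q\widehat{f})$ on the mixing shift $\TMS$ (standard multifractal analysis), two functions with the same pair of multisets have the same entropy spectrum.

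Now for each $f$ I would define $g$ by transposing the two length-three weights: let $Q'=Q(g)\in\mcM_{\bfA}$ be given by $Q'_{12}=a$, $Q'_{32}=e$, $Q'_{42}=b+d$, $Q'_{14}=b/(b+d)$, $Q'_{24}=d/(b+d)$, and $Q'_{21}=Q'_{13}=1$. A direct check shows $Q'$ is column stochastic with the same cycle-weight multisets $\{a,d\}$ and $\{b,e\}$, whence $g\in[f]_1$. On the other hand, by (\ref{eq:5211}) one has $\widehat{f}|_{\{\omega:\,\omega_0=i,\ \omega_1=j\}}=\log Q(f)_{ij}$, so $\widehat{f}(\TMS)=\{\log Q(f)_{ij}:ij\in E(\bfA)\}$; by Lemma \ref{lem:0477}(ii) an isomorphism between $(\TMS,\tms,\mu_f)$ and $(\TMS,\tms,\mu_g)$ would force $\widehat{f}(\TMS)=\widehat{g}(\TMS)$, i.e.\ $\{Q(f)_{ij}\}=\{Q(g)_{ij}\}$ as sets. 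But $Q(f)_{32}=b$, while the free entries of $Q'$ are $\{a,e,b+d,b/(b+d),d/(b+d)\}$, and $b$ coincides with one of these only on a proper real-analytic subset of $\mcC$. Hence off a measure-zero set the value sets differ, the systems are not isomorphic, $g\notin[f]_3\supset[f]_2$, and $[f]_1\neq[f]_2$.

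Finally I would package genericity exactly as in Lemma \ref{lem:0036}. The set $G=\{f\in\mcC:\widehat{f}(\TMS)\neq\widehat{g}(\TMS)\}$ is open, since equality of two finite value sets is a closed condition, and its complement lies in the zero set of a nontrivial real-analytic function, hence has Lebesgue measure zero by \cite[Claims 1 and 2]{Mityagin}. Thus $G\subset\mcC\setminus\mcC_{\equirigid}$ is open with $\Leb(\mcC\setminus G)=0$, which is (WNR), and Theorem \ref{thm:5066} yields (SNR). The step needing the most care is the characteristic-polynomial computation together with the identification of $\mcE^{(\mu_f)}$ as the Legendre transform of $q\mapsto\log\rho(Q(f)^{(q)})$; the remaining points — that $Q'\in\mcM_{\bfA}$ and the real-analytic genericity — are routine.
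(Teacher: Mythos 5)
Your proposal is correct, and its core coincides with the paper's: after the change of notation $a=a_1$, $b=a_2$, $d=a_3b_2$, $e=a_3b_1$ (so that $c=1-a_1-a_3b_1=b+d$), your companion matrix $Q'$ is exactly the matrix $Q(g)$ used in Lemma \ref{lem:6168}, and the equality of entropy spectra is established the same way — the characteristic polynomials of the entrywise $q$-th powers agree because the two length-two cycle weights are preserved while the two length-three cycle weights are swapped (the paper cites \cite[Theorem A4.2]{Pesin} for the passage from the pressure function to the spectrum, and your explicit cycle expansion of $\det(\lambda I-Q^{(q)})$ checks out since every cycle of the graph passes through vertex $2$). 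The genericity packaging via real analyticity and \cite{Mityagin} is likewise the same as Lemma \ref{lem:0036}. Where you genuinely diverge is the non-equivalence step. The paper proves only $g\notin[f]_2$: it observes that the Birkhoff sums of $\widehat f$ and $\widehat g$ over the period-$3$ orbit $132132\cdots$ are $\log a_2$ and $\log(a_3b_1)$, hence differ under the hypothesis $Q(f)_{32}\ne Q(f)_{14}Q(f)_{42}$, so $\mu_f\ne\mu_g$; it then needs $\Aut(\TMS)=\{\id\}$ (Lemma \ref{lem:6668}, via the total amalgamation and \cite[Corollary 3.2.8]{Kitchens}). You instead invoke Lemma \ref{lem:0477}(ii): generically $\log Q(f)_{32}$ belongs to $\widehat f(\TMS)$ but not to $\widehat g(\TMS)$, so the systems are not even isomorphic. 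This buys something real — you bypass Lemma \ref{lem:6668} entirely and obtain the stronger conclusion $g\notin[f]_3$, which yields (SNR) directly, making your final detour through Theorem \ref{thm:5066} (and hence Theorem \ref{thm:7724}) superfluous. The price is a slightly larger (still analytic, measure-zero) exceptional set; to be complete you should note that $b=1$, $b=b+d$ and $b=b/(b+d)$ are automatically impossible on $\mcC$, and exhibit a witness $f$ for each of the remaining coincidence conditions $b=a$, $b=e$, $b=d/(b+d)$ so that the corresponding real-analytic functions are not identically zero.
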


We denote by $\Aut(\TMS)$ the set of all automorphisms of $\TMS$.
We begin the proof of Theorem \ref{thm:0426} with the following lemma:

\begin{lemma}
\label{lem:6668}
Let $\bfA$ be as in (\ref{eq:2978}).
Then, $\Aut(\TMS)=\{\id\}$.
\end{lemma}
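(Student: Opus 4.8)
The goal is to show that the only automorphism of the topological Markov shift $\TMS$ with transition matrix $\bfA$ given in (\ref{eq:2978}) is the identity. The plan is to exploit the structural rigidity coming from columns of $\bfA$ having a single non-zero entry, since these force long-range matching of coordinates under any conjugacy. Recall that the third and fourth columns each have only one non-zero entry (indeed, reading (\ref{eq:2978}), $\delta_{\bfA}(3)=1$ and $\delta_{\bfA}(4)=1$, witnessed by the single $1$ in each of those columns coming from row $1$ and row $1$ respectively), so $V_0(\bfA)=\{1,2\}$ in the notation of Section \ref{sec:preliminaries}.

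First I would recall that any automorphism $\tau$ of $\TMS$ is a homeomorphism commuting with $\tms$, hence by the Curtis--Hedlund--Lyndon theorem it is given by a sliding-block code with some memory and anticipation; equivalently, there is $m$ such that $(\tau\omega)_0$ depends only on $\omega_0\cdots\omega_m$, and the same holds for $\tau^{-1}$. The key combinatorial input is that a symbol $j$ with $\delta_{\bfA}(j)=1$ is \emph{forced}: if $j\notin V_0(\bfA)$ then the predecessor of $j$ in any admissible word is uniquely determined, so knowing that $\omega_{k}=j$ pins down $\omega_{k-1}$. This is exactly the mechanism used in the proof of Lemma \ref{lem:1723}. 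I would combine this with a direct inspection of which symbols can follow which, to show that long blocks of the trajectory are rigidly determined.

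The concrete steps I would carry out are: (1) Determine the allowed transitions from (\ref{eq:2978}) explicitly and identify the periodic structure — in particular locate the fixed points and periodic points of low period, since an automorphism must permute periodic points of each period while preserving their itineraries up to the conjugacy. (2) Use the forcing property of symbols $3$ and $4$ (both only reachable from symbol $1$, and both leading only to symbol $2$) to argue that any occurrence of $3$ or $4$ in an itinerary is surrounded by a rigidly determined context, so that a sliding-block code cannot permute symbols without violating admissibility. (3) Show that $\tau$ must fix each symbol, i.e.\ $(\tau\omega)_0=\omega_0$ for all $\omega$, by checking that the constraints from (1) and (2) leave no freedom; once the $0$-th symbol is determined by $\omega_0$ alone and equals it, commutation with $\tms$ forces $\tau=\id$. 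The asymmetry of the matrix (symbols $3,4$ behave identically as far as transitions go, yet are distinguished by admissibility of the words around them) is what must be leveraged to rule out the potential swap $3\leftrightarrow4$.

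The main obstacle I anticipate is precisely ruling out the candidate automorphism that swaps the symbols $3$ and $4$: from the transition data alone, $3$ and $4$ are interchangeable (both have predecessor set $\{1\}$ and successor set $\{2\}$), so naively one might expect a nontrivial involution. The resolution must come from examining the actual metric/homeomorphism structure or a finer admissibility constraint — one has to verify whether swapping $3$ and $4$ throughout every itinerary yields a well-defined homeomorphism commuting with $\tms$, and if it does, then $\Aut(\TMS)$ would \emph{not} be trivial, so the lemma's truth hinges on some additional asymmetry I would need to locate carefully in (\ref{eq:2978}) (for instance an asymmetry in the rows governing where $3$ and $4$ may appear as a first coordinate, or in return-time/period data that a conjugacy must preserve). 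Pinning down this asymmetry and proving the swap fails to be a conjugacy is the crux; the remaining forcing arguments are then routine given Lemma \ref{lem:1723} and the primitivity of $\bfA$.
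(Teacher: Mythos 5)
There is a genuine gap, and also a factual error in your reading of the matrix. From (\ref{eq:2978}) the columns of $\bfA$ are $(0,1,0,0)^{\top}$, $(1,0,1,1)^{\top}$, $(1,0,0,0)^{\top}$, $(1,1,0,0)^{\top}$, so $\delta_{\bfA}(1)=1$, $\delta_{\bfA}(2)=3$, $\delta_{\bfA}(3)=1$, $\delta_{\bfA}(4)=2$ and hence $V_0(\bfA)=\{2,4\}$, not $\{1,2\}$. In particular your claim that symbols $3$ and $4$ are ``both only reachable from symbol $1$'' is false: the predecessor set of $4$ is $\{1,2\}$ while that of $3$ is $\{1\}$. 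The ``main obstacle'' you identify --- a putative symmetry between $3$ and $4$ --- therefore does not exist even at the level of the underlying directed graph (vertices $1,2,3,4$ have pairwise distinct in/out-degree pairs $(1,3),(3,2),(1,1),(2,1)$, so the graph has trivial automorphism group). You spend the final paragraph conceding that you do not know how to resolve an obstacle that a correct reading of $\bfA$ dissolves immediately.

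More importantly, the actual crux of the lemma is not addressed. Triviality of the graph automorphism group only rules out $1$-block conjugacies; an automorphism of a one-sided shift is in general a sliding-block code with nontrivial anticipation, and your step (3) --- ``show that $(\tau\omega)_0$ depends on $\omega_0$ alone and equals it'' --- is precisely the hard part, for which you offer no mechanism beyond ``checking that the constraints leave no freedom.'' The paper closes this gap by citing Kitchens: for a one-sided topological Markov shift, $\Aut(\TMS)$ is isomorphic to the automorphism group of the graph of the \emph{total amalgamation} of $\bfA$, and since no two columns of $\bfA$ coincide the total amalgamation is $\bfA$ itself, reducing the problem to the (trivial) graph automorphism computation. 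Without invoking this theorem or supplying an equivalent argument that reduces arbitrary block codes to symbol permutations, your outline does not constitute a proof.
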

\begin{proof}
Let $H$ be the (unlabeled) directed graph whose adjacency matrix is $\bfA$.
Then, it is easy to see that the automorphism group of $H$ is trivial.
Moreover, since $\bfA$ has no repeated columns, the \emph{total amalgamation} (for the definition, see \cite[Chapters 2 and 3]{Kitchens}) of $\bfA$ is $\bfA$ itself.
Thus, by \cite[Corollary 3.2.8]{Kitchens}, the lemma follows. 
\end{proof}

The next lemma implies that the weak non-rigidity (WNR) occurs for $\bfA$ in (\ref{eq:2978}).

\begin{lemma}
\label{lem:6168}
Let $\bfA$ be as in (\ref{eq:2978}) and let $f\in\mcC$.
If $Q(f)_{32}\ne Q(f)_{14}Q(f)_{42}$, then there exists $g\in\mcC$ satisfying the following two conditions:
\begin{enumerate}
\item[(i)]$\mcE^{(\mu_f)}=\mcE^{(\mu_g)}$.
\item[(ii)]$(\TMS,\tms,\mu_f)$ and $(\TMS,\tms,\mu_g)$ are not equivalent.
\end{enumerate}
\end{lemma}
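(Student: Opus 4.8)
The plan is to translate both requirements into conditions on the column stochastic matrix $Q(g)$ and then to produce $g$ by an explicit weight-preserving rearrangement of $Q(f)$. First I would reduce condition (ii) to $\mu_f\neq\mu_g$. Since every topological conjugacy from $(\TMS,\tms)$ to itself is an automorphism, Lemma \ref{lem:6668} gives $\Aut(\TMS)=\{\id\}$, so $(\TMS,\tms,\mu_f)$ and $(\TMS,\tms,\mu_g)$ are equivalent precisely when $\mu_f=\mu_g$. As the Markov measure $\mu_f$ both determines and is determined by $Q(f)$, it suffices to find $g\in\mcC$ with $Q(g)\neq Q(f)$ and $\mcE^{(\mu_f)}=\mcE^{(\mu_g)}$. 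Moreover, for any $Q'\in\mcM_{\bfA}$ the $2$-locally constant function $g$ with $g(\omega)=\log Q'_{\omega_0\omega_1}$ satisfies $A(g)=Q'$, hence $\lambda_g=1$ and $p(g)=(1/N,\dots,1/N)$ by Remark \ref{rem:9599}, so $Q(g)=Q'$; thus I only need to exhibit the matrix $Q'$.

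Next I would express the spectrum through a pressure function. Since $\widehat f(\omega)=\log Q(f)_{\omega_0\omega_1}$, the Gibbs property identifies the local entropy of $\mu_f$ at $\omega$ with $\lim_n-\tfrac1n\sum_{k=0}^{n-1}\widehat f(\tms^k\omega)$, so $\mcE^{(\mu_f)}$ is the Birkhoff spectrum of $-\widehat f$, which by the multifractal formalism is the Legendre transform of $t\mapsto P(\tms,t\widehat f)$. Applying Proposition \ref{prop:2018}(ii) to the $2$-locally constant function $t\widehat f$, whose matrix is $A(t\widehat f)=(\bfA_{ij}Q(f)_{ij}^{t})=:M(t)$, gives $P(\tms,t\widehat f)=\log\lambda(t)$ with $\lambda(t)$ the Perron root of $M(t)$. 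Hence it is enough to choose $Q'$ so that $M(t)$ and $M'(t):=(\bfA_{ij}(Q'_{ij})^{t})$ have the same characteristic polynomial for every $t\in\R$.

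To compute that polynomial I would use the cycle structure of $\bfA$ in (\ref{eq:2978}): every simple cycle passes through vertex $2$, so no two simple cycles are vertex-disjoint, and the simple cycles are $(1,2)$, $(2,4)$, $(1,3,2)$, $(1,4,2)$ with $Q$-weights $Q_{12}$, $Q_{24}Q_{42}$, $Q_{32}$, $Q_{14}Q_{42}$ (using $Q_{21}=Q_{13}=1$). The cycle expansion of the determinant then collapses to
\[
\det(sI-M(t))=s^4-\big(Q_{12}^{t}+(Q_{24}Q_{42})^{t}\big)s^2-\big(Q_{32}^{t}+(Q_{14}Q_{42})^{t}\big)s ,
\]
so it depends only on the unordered weight pairs $\{Q_{12},Q_{24}Q_{42}\}$ and $\{Q_{32},Q_{14}Q_{42}\}$. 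I would then define $Q'\in\mcM_{\bfA}$ by keeping the first pair and swapping the second: $Q'_{12}=Q_{12}$, $Q'_{32}=Q_{14}Q_{42}$, $Q'_{42}=Q_{32}+Q_{24}Q_{42}$, $Q'_{14}=Q_{32}/Q'_{42}$, $Q'_{24}=Q_{24}Q_{42}/Q'_{42}$, $Q'_{21}=Q'_{13}=1$. A short check shows all entries are positive and both columns sum to $1$ (column $2$ gives $Q_{12}+Q_{32}+Q_{42}(Q_{14}+Q_{24})=1$ and column $4$ gives $Q'_{14}+Q'_{24}=1$, using $Q_{14}+Q_{24}=1$), so $Q'\in\mcM_{\bfA}$, while by construction $Q'_{24}Q'_{42}=Q_{24}Q_{42}$ and $Q'_{14}Q'_{42}=Q_{32}$, so both weight pairs—and hence the characteristic polynomial—are preserved; this gives (i). Finally $Q'_{32}=Q_{14}Q_{42}\neq Q_{32}$ by hypothesis, so $Q(g)=Q'\neq Q(f)$ and $\mu_g\neq\mu_f$, giving (ii).

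The hard part will be the two structural inputs rather than any estimate: justifying that $\mcE^{(\mu_f)}$ depends on $f$ only through the pressure function $t\mapsto P(\tms,t\widehat f)$ (the multifractal formalism together with the Gibbs comparison above), and verifying that every simple cycle of $\bfA$ meets vertex $2$, which is precisely what forces the characteristic polynomial down to the two symmetric expressions. The rearrangement must also respect column stochasticity, and the one point needing care is that column $4$ renormalizes automatically; this works exactly because $Q_{14}+Q_{24}=1$, so the weight $Q_{24}Q_{42}$ of the untouched pair reappears inside $Q'_{42}$.
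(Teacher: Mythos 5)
Your proposal is correct and follows essentially the same route as the paper: the matrix $Q'$ you construct is literally the paper's $Q(g)$ (with $a_1=Q_{12}$, $a_2=Q_{32}$, $a_3=Q_{42}$, $b_1=Q_{14}$, $b_2=Q_{24}$ and $c=Q_{32}+Q_{24}Q_{42}=1-a_1-a_3b_1$), and part (i) is proved exactly as in the paper by checking that the characteristic polynomials of $Q(f)_q$ and $Q(g)_q$ coincide for all $q$. The only (harmless) variation is in part (ii), where you conclude $\mu_f\ne\mu_g$ from the fact that the Gibbs measure determines its Jacobian $\exp(\widehat f)=Q(f)_{\omega_0\omega_1}$, whereas the paper exhibits the periodic orbit $132132\cdots$ on which the Birkhoff sums of $\widehat f$ and $\widehat g$ differ; both reduce to the hypothesis $Q(f)_{32}\ne Q(f)_{14}Q(f)_{42}$ and to Lemma \ref{lem:6668}.
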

\begin{proof}
Let
\[
Q(f)=\begin{pmatrix}
0&a_1&1&b_1\\
1&0&0&b_2\\
0&a_2&0&0\\
0&a_3&0&0
\end{pmatrix}
\]
and take $g\in\mcC$ so that
\[
Q(g)=\begin{pmatrix}
0&a_1&1&a_2/c\\
1&0&0&a_3b_2/c\\
0&a_3b_1&0&0\\
0&c&0&0
\end{pmatrix},
\]
where $c=1-a_1-a_3b_1$.

We prove (i).
For $Q\in\mcM_{\bfA}$ and $q\in\R$, we define $Q_q\in\ M_{\bfA}$ by $(Q_q)_{ij}=\bfA_{ij}Q_{ij}^q,\ i,j\in\{1,\dots,N\}$. 
It is easy to see that, for $q\in\R$, the two characteristic polynomials of $Q(f)_q$ and $Q(g)_q$ are the same.
Therefore, $\mcE^{(\mu_f)}=\mcE^{(\mu_g)}$ (see, e.g., \cite[Theorem A4.2]{Pesin}).

We prove (ii).
Let $\omega$ be the periodic point $132132\cdots$ with period 3.
Then, $\sum_{k=0}^2\widehat{f}(\tms^k\omega)=a_2\ne a_3b_1=\sum_{k=0}^2\widehat{g}(\tms^k\omega)$, and hence, $\widehat{f}$ and $\widehat{g}$ are not cohomologous with a constant.
Therefore, $\mu_f=\mu_{\widehat{f}}\ne\mu_{\widehat{g}}=\mu_g$, and hence, by Lemma \ref{lem:6668}, the two systems $(\TMS,\tms,\mu_f)$ and $(\TMS,\tms,\mu_g)$ are not equivalent.
\end{proof}

We are ready to prove Theorem \ref{thm:0426}.

\begin{proof}[Proof of Theorem \ref{thm:0426}]
By the same argument as that in the proof Lemma \ref{lem:0036}, we can prove that the set $G=\{f\in\mcC:Q(f)_{32}\ne Q(f)_{14}Q(f)_{42}\}$ is open in $\mcC$ and that $\Leb(\mcC\setminus G)=0$.
Thus, it follows from Lemma \ref{lem:6168} that (WNR) occurs.
Therefore, by Theorem \ref{thm:5066}, we conclude that (SNR) occurs.
\end{proof}

\begin{remark}
\normalfont
Let $\bfA$ be as in (\ref{eq:2978}).
We can give a rigidity to the shift $(\TMS,\tms)$ as follows.

Recall the definition of a cycle given in Section \ref{sec:preliminaries}.
For a cycle $w$, we define the new cycle $\rot(w)$ by $\rot(w)=w_1\cdots w_{|w|-1}w_1$.
Two cycles $w,w'$ are identified if $w'=\rot^k(w)$ for some $k\in\N\cup\{0\}$.
Moreover, for two cycles $w,w'$, we write $w\cap w'\ne\emptyset$ if $w_k=w'_l$ for some $k\in\{0,\dots,|w|-1\}$ and $l\in\{0,\dots,|w'|-1\}$.

We denote by $S_{\bfA}$ the set of all $\bfA$-admissible simple cycles.
Then, $S_{\bfA}=\{121,242,1321,1421\}$, and hence, the following assertion holds:
\begin{equation}
\label{eq:6637}
\mbox{for any}\ w,w'\in S_{\bfA},\ w\cap w'\ne\emptyset\ \mbox{and}\ \frac{|w|-1}{|w'|-1}<2.
\end{equation}
Let $Q\in\mcM_{\bfA}$.
For $q\in\R$, we define the matrix $Q_q\in M_{\bfA}$ by $(Q_q)_{ij}=\bfA_{ij}Q_{ij}^q,\ i,j\in\{1,\dots,N\}$.
Moreover, we define the one-parameter family $\Phi_Q$ of characteristic polynomials by $\Phi_Q=\{\det(zI-Q_q)\}_{q\in\R}$.
For $Q,\widehat{Q}\in\mcM_{\bfA}$, we write $\Phi_Q=\Phi_{\widehat{Q}}$ if $\det(zI-Q_q)=\det(zI-\widehat{Q}_q)$ for all $q\in\R$ and $z\in\C$.
Let $\mcC=L_2(\TMS,\R)$ and
\[
\mcC_{\prigid}=\{f\in\mcC:\Phi_{Q(f)}=\Phi_{Q(g)}\ \mbox{for any}\ g\in[f]_1\},
\]
where the set $[f]_1$ is as in Section \ref{sec:introduction}.
(Notice that, as mentioned in the proof of Lemma \ref{lem:6168}, $\mcE^{(\mu_f)}=\mcE^{(\mu_g)}$ if $\Phi_{Q(f)}=\Phi_{Q(g)}$.)
Unlike Theorem \ref{thm:0426} of a non-rigidity result, the following rigidity result holds:
\begin{equation}
\label{eq:2042}
\mcC_{\prigid}\ \mbox{contains an open set}\ G\ \mbox{of}\ \mcC\ \mbox{such that}\ \Leb(\mcC\setminus G)=0. 
\end{equation}
Using (\ref{eq:6637}), we can prove (\ref{eq:2042}) by an argument similar to that in the proof of \cite[Theorem 1.4]{Nakagawa BBMS 2021}.
\end{remark}



\section*{Funding}
The author is supported by JSPS KAKENHI Grant Number 21K13809.


\end{document}